\def\mymode{i}  
\renewcommand{\concat}{}
\DeclareMathOperator{\Int}{Int}
\title{Polish topometric groups}
\author{Ita\"i Ben Yaacov}
\address{Ita\"i \textsc{Ben Yaacov} \\
  Universit\'e de Lyon \\
  CNRS UMR 5208 \\
  Universit\'e Lyon 1 \\
  Institut Camille Jordan \\
  43 blvd. du 11 novembre 1918\\
  F-69622 Villeurbanne Cedex \\
  France}
\urladdr{\url{http://math.univ-lyon1.fr/~begnac/}}
\author{Alexander Berenstein}
\address{Alexander Berenstein \\
  Universidad de los Andes \\
  Departamento de Matemáticas \\
  Carrera 1 \# 18A-10, Bogotá \\
  Colombia}
\urladdr{\url{http://matematicas.uniandes.edu.co/~aberenst}}
\author{Julien Melleray}
\address{Julien \textsc{Melleray} \\
  Universit\'e de Lyon \\
  CNRS UMR 5208 \\
  Universit\'e Lyon 1 \\
  Institut Camille Jordan \\
  43 blvd. du 11 novembre 1918\\
  F-69622 Villeurbanne Cedex \\
  France}
\urladdr{\url{http://math.univ-lyon1.fr/~melleray/}}
\begin{document}

\thanks{Work on this project was facilitated by ANR chaire d'excellence junior THEMODMET (ANR-06-CEXC-007), a visit of the third author at the Erwin Schr\"odinger Institute in Vienna, and the ECOS Nord program (action ECOS Nord C10M01).
  The first author is also supported by the Institut Universitaire de France}

\if\mymode i
\svnInfo $Id: TopMetGen.tex 1292 2011-11-18 15:56:49Z begnac $
\thanks{\textit{Revision} {\svnInfoRevision} \textit{of} \today}
\fi

\keywords{ample generics, topometric group, automorphism group}
\subjclass[2010]{03E15,37B05}

\begin{abstract}
  We define and study the notion of \emph{ample metric generics} for a Polish topological group, which is a weakening of the notion of ample generics introduced by Kechris and Rosendal in \cite{Kechris-Rosendal:Turbulence}.
  Our work is based on the concept of a \emph{Polish topometric group}, defined in this article.
  Using Kechris and Rosendal's work as a guide, we explore consequences of ample metric generics (or, more generally, ample generics for Polish topometric groups).
  Then we provide examples of Polish groups with ample metric generics, such as the isometry group $\Iso(\bU_1)$ of the bounded Urysohn space, the unitary group ${\mathcal U}(\ell_2)$ of a separable Hilbert space, and the automorphism group $\Aut([0,1],\lambda)$ of the Lebesgue measure algebra on $[0,1]$.
  We deduce from this and earlier work of Kittrell and Tsankov that this last group has the automatic continuity property, i.e., any morphism from $\Aut([0,1],\lambda)$ into a separable topological group is continuous.
\end{abstract}
\maketitle

\section{Introduction}
This paper presents a technique to study ``large'' Polish groups\footnote{for definitions of most concepts discussed in the abstract and in this Introduction, see \fref{sec:Prelim} below.}.
These groups, which typically appear as automorphism groups of highly homogeneous structures, may have some surprising properties; an example could be the small index property, which states that any subgroup of index strictly less than the continuum is open.
Another interesting possible attribute of such groups is the Bergman property: whenever $G$ acts by isometries on a metric space, all the orbits are bounded.
Or one could think of extreme amenability (all continuous actions on compact sets have fixed points), the Steinhaus property (which implies that any morphism from $G$ to a separable group $H$ is continuous)\ldots the list could go on and on.

In their paper \cite{Kechris-Rosendal:Turbulence}, Kechris and Rosendal continued work initiated by Hodges, Hodkinson, Lascar and Shelah in \cite{Hodges-Hodkinson-Lascar-Shelah:SmallIndex} and introduced the notion of \emph{ample generics} for a Polish group $G$: $G$ has ample generics if for any $n$ there exists a tuple $\bar g = (g_0,\ldots,g_{n-1}) \in G$ whose diagonal conjugacy class
\begin{gather*}
  \bar g^G = \{(kg_0k^{-1},\ldots,kg_{n-1}k^{-1}) \colon k \in G \}
\end{gather*}
is co-meagre in $G^n$.
Kechris and Rosendal showed that this notion provides a unified approach to some of the properties discussed above, as any Polish group with ample generics must have the small index property and the Steinhaus property (among others).
Ample generics is also closely related to the Bergman property.
Finally, having ample generics is a strong condition, and Polish groups with ample generics are relatively rare.

Frustratingly, Polish groups with ample generics are even scarcer than one would hope: at the moment all known examples are subgroups of ${\mathfrak S}_{\infty}$, the permutation group on the set of integers.
It is a standard fact that the closed subgroups of ${\mathfrak S}_{\infty}$ are exactly (up to a homeomorphic isomorphism) the automorphism groups of countable logical structures, and as such are far from covering the whole variety of Polish groups (for example, they are totally disconnected).
General Polish groups are, on the other hand, exactly the automorphism groups of Polish metric structures, in the sense of \emph{continuous logic}, a generalisation of first order logic introduced by A.\ Usvyatsov and the first author in \cite{BenYaacov-Usvyatsov:CFO}.
We thus find ourselves doing some kind of transition from a discrete setting to a metric one, and it may be worthwhile to see how model theory has handled the same transition.

In metric model theory (see \cite{BenYaacov-Berenstein-Henson-Usvyatsov:NewtonMS} for a general survey) it is a recurrent practice to take some object which, in the discrete setting, would be purely topological (most commonly, a type space), add some natural distance which \emph{refines} the topology, and then adapt definitions to take this additional distance into account (since the topology alone need no longer provide enough information).
This goes back to Iovino's definition of $\lambda$-stability in Banach spaces \cite{Iovino:StableBanach}, as well as to Henson's version of the Ryll-Nardzewski Theorem for metric structures (unpublished by its author, but see \cite{BenYaacov-Usvyatsov:dFiniteness}).
The formalism of topometric spaces which we use here is introduced in \cite{BenYaacov-Usvyatsov:CFO,BenYaacov:TopometricSpacesAndPerturbations}, where it is used to define a generalisation of the Cantor-Bendixson analysis, for the purpose of defining local stability and $\aleph_0$-stability in metric structures.
One sanity check which such modifications must pass is that for a discrete distance the modified definitions should coincide with the classical ones.

In the situation at hand, the automorphism group of a metric structure $G = \Aut(\cM)$ is equipped with the topology $\tau$ of point-wise convergence, as in the discrete case, and at the same time can be equipped with the distance $\partial$ of \emph{uniform} convergence (which is discrete for a discrete structure, so passes unnoticed).
The topology $\tau$ is a group topology, and is Polish whenever $\cM$ is Polish; the distance $\partial$ is bi-invariant (so in particular, induces a group topology), refines $\tau$, and is not in general separable.
If a Polish group $(G,\tau)$ is not given as an automorphism group, it still admits a coarsest bi-invariant distance refining the topology $\tau$ of $G$.
One is thus led to study triplets  $(G,\tau,\partial)$ where $(G,\tau)$ is a Polish group and $\partial$ is a bi-invariant distance which refines $\tau$ (and satisfies an additional compatibility condition, see \fref{sec:Prelim}); such triplets will be called \emph{Polish topometric groups}.
Even in the case of a pure Polish group $(G,\tau)$, taking the induced bi-invariant distance $\partial_u$ into account will allow us to retrieve information about $(G,\tau)$.

We say that a Polish topometric group $(G,\tau,\partial)$ has \emph{ample generics} if it admits diagonal conjugacy classes whose $\partial$-closures are $\tau$-co-meagre.
If $(G,\tau,\partial_u)$ has ample generics we say that $(G,\tau)$ has \emph{ample metric generics}.
The reader will notice that this passes our sanity check requirement.

First, this relaxation of the definition of ample generics allows new examples.
Indeed, in \fref{sec:Examples} we provide examples of groups $G$ without ample generics (in fact, in which all diagonal conjugacy classes are known to be meagre), which do have ample metric generics.
Such groups include the isometry group of the universal Urysohn space, the group of measure-preserving automorphisms of the unit interval, and the unitary group of a complex Hilbert space.

Second, the relaxed definition allows results similar to those in \cite{Kechris-Rosendal:Turbulence}, discussed in \fref{sec:Consequences}.
Our main automatic continuity result is the following.

\begin{thm*}
  Let $(G,\tau,\partial)$ be a Polish topometric group with ample generics, $H$ a separable topological group and $\varphi \colon G \to H$ a morphism such that $\varphi \colon (G,\partial) \to H$ is continuous.
  Then $\varphi \colon (G,\tau) \to H$ is continuous.
\end{thm*}

This theorem, along with work of Kittrell and Tsankov \cite{Kittrell-Tsankov:TopologicalPropertiesOfFullGroups}, and the fact that the group $\Aut([0,1],\lambda)$ of measure preserving transformations of the unit interval has ample metric generics, yields the following result.

\begin{thm*}
  Let $\varphi \colon \Aut([0,1],\lambda) \to H$ be a morphism, with $H$ a separable topological group.
  Then $\varphi$ is continuous when $\Aut([0,1],\lambda)$ is endowed with its usual Polish topology.
\end{thm*}

From this and \cite{Glasner:AutMuRoelckePrecompact} one may deduce the following result.

\begin{thm*}
The group $\Aut([0,1],\lambda)$ has a unique  topology which is Hausdorff, second-countable, and compatible with its group structure (namely, its usual Polish group topology).
\end{thm*}

We should note here that Kallman \cite{Kallman:UniquenessForGroups} had already proved that $\Aut([0,1],\lambda)$ admits a unique Polish group topology compatible with its algebraic structure.

The structure of most of our arguments is very similar to those of Kechris and Rosendal; in some proofs in Sections \ref{sec:TopometricAmpleGenerics} and \ref{sec:Consequences} of the paper, whenever one replaces $\varepsilon$ by $0$ one obtains an argument that appears verbatim in \cite{Kechris-Rosendal:Turbulence}.
For the convenience of the reader, we still present the proofs in full detail.

\section{Preliminaries}
\label{sec:Prelim}

\subsection{Polish spaces and groups}
\label{sec:Polish}

Recall that a \emph{Polish space} is a separable topological space $X$ which is completely metrisable, i.e., admits a compatible complete distance.
Some times we want the distance on $X$ to be fixed: by a \emph{Polish metric space} we mean a complete separable metric space $(X,d)$.

Whenever $(X,d)$ is a metric space, one can consider its isometry group $\Iso(X,d)$, the group of distance-preserving bijections from $X$ onto itself, equipped with the (group) topology of point-wise convergence.
When $(X,d)$ is a Polish metric space, $\Iso(X,d)$ is a \emph{Polish group}, i.e., a topological group which is Polish as a topological space.
Note that, by the Birkhoff-Kakutani theorem, any Polish group admits a compatible left-invariant distance (which need not be complete).

A major tool in the study of Polish spaces is the Baire category theorem.
Recall that a subset $A$ of a topological space is \emph{meagre} if it is a countable union of nowhere-dense sets.
It is \emph{co-meagre} if its complement is meagre, or equivalently, if it contains a countable intersection of dense open sets.
The Baire category theorem asserts that, in a Polish space, any co-meagre set is dense.
This notion is particularly useful in Polish groups, because of Pettis' theorem, which asserts that whenever $G$ is a Polish group and $A$ is a subset of $G$ that is co-meagre in some non empty open set then $A {\cdot} A^{-1}$ contains a neighbourhood of $1_G$.
We shall make frequent use of this theorem below.
Let us point out two well-known consequences of Pettis' theorem: first, if $H$ is a subgroup of a Polish group $G$ which is co-meagre in some non empty open subset of $G$, then $H$ is open in $G$.
Second, if $\varphi \colon G \to H$ is a continuous, bijective morphism between two Polish groups, then $\varphi$ is a homeomorphism.

We need to recall some other classical notions of descriptive set theory.
If $X$ is a Polish space and $A \subseteq X$, one says that $A$ is \emph{Baire-measurable} if there exists an open subset $O$  of $X$ such that $A \Delta O$ is meagre.
These sets form a $\sigma$-algebra that contains the Borel subsets of $X$.
Whenever a Baire-measurable set is non meagre it must be co-meagre in some non empty open set, and this fact is frequently used below, often in conjunction with Pettis' theorem.
Another important property of Baire-measurable sets is the Kuratowski-Ulam Theorem.

\begin{fct}[Kuratowski-Ulam Theorem]
  Let $X$ and $Y$ be Polish spaces and let $A \subseteq X \times Y$ be Baire-measurable.
  For $x \in X$ let $A_x = \{y\colon (x,y) \in A\}$.
  Then $A$ is co-meagre in $X \times Y$ if and only if $A_x$ is co-meagre in $Y$ for all $x$ in a co-meagre set, and in any case $A_x$ is Baire-measurable for all $x$ in a co-meagre set.
\end{fct}

Given an ambient space $X$, the notation $\forall^* x \ P(x)$ means that the set $\{x \in X \colon P(x) \}$ is co-meagre in $X$.
Then the first assertion of the Theorem can be restated as
\begin{gather*}
  \forall^* (x,y) \  (x,y) \in A \quad \Longleftrightarrow \quad \forall^* x\  \forall^* y \  (x,y) \in A.
\end{gather*}

The $\sigma$-algebra of Baire-measurable sets of a Polish space $X$ contains (if $X$ is uncountable) sets that are non Borel.
In particular, any \emph{analytic} subset of $X$ is Baire-measurable.
Recall that a subset $A$ of $X$ is analytic if there exists a Polish space $Y$ and a Borel subset $B$ of $X\times Y$ such that $A$ is equal to the projection of $B$ on the first coordinate.
In what follows, we shall use two facts about analytic sets, besides their Baire-measurability: the intersection of countably many analytic subsets of a Polish space $X$ is still analytic; and whenever $X,Y$ are Polish metric spaces, $A \subseteq X$ is analytic and $f \colon X \to Y$ is continuous, the set $f(A)$ is an analytic subset of $Y$.

For information about Polish spaces and groups, and about descriptive set theory, the reader is invited to consult \cite{Gao:InvariantDescriptiveSetTheory}, \cite{Kechris:Classical} and the bibliographical references therein.

\subsection{Topometric groups}

We define the main objects of study of this paper.

\begin{dfn}
  We say that a triplet $(X,\tau,\partial)$ is a \emph{topometric space} if $X$ is a set, $\tau$ is a topology on $X$ and $\partial$ is a distance such that the topology induced by $\partial$ refines $\tau$ in such a way that $\partial$ is $\tau$-lower semi-continuous, i.e., the set $\{(x,y) \colon \partial(x,y) \le r\}$ is closed for all $r$.

  A \emph{topometric group} is a triplet $(G,\tau,\partial)$ which is a topometric space and is such that $(G,\tau)$ is a topological group and $\partial$ is bi-invariant.
\end{dfn}

(Topometric spaces were originally introduced in \cite{BenYaacov-Usvyatsov:CFO,BenYaacov:TopometricSpacesAndPerturbations}.)

\begin{exm}
  \label{exm:UniformDistance}
  Let $(G,\tau)$ be a metrisable group and let $d_L$ be a compatible left-invariant distance on $G$.
  We then define
  \begin{gather*}
    \partial_u(g,h) = \sup_{k \in G} d_L(gk,hk).
  \end{gather*}
  One can check that $\partial_u$ is bi-invariant, refines $\tau$, and is coarsest such (up to uniform equivalence).
  Moreover, $(G,\tau,\partial_u)$ is a topometric group, and if $G$ is Polish then $\partial_u$ is complete (even though $d_L$ need not be).
\end{exm}

By analogy with distances, one could say that a uniformity on a group is \emph{bi-invariant} if it is generated by bi-invariant entourages.
Then every topological group admits a coarsest bi-invariant uniformity refining its topology, and if the group is metrisable then so is the corresponding bi-invariant uniformity, being given by $\partial_u$ as above.

By a convenient abuse of notation, we shall tend to ignore the distinction between any two uniformly equivalent distances.
Specifically, whenever $(G,\tau)$ is a Polish group and $\partial$ is a distance that generates the coarsest bi-invariant uniformity refining $\tau$, we shall write $\partial=\partial_u$, even though of course there are many different distances that generate this uniformity.

\begin{rmk}
  It is important to point out that some distances we consider (and most bi-invariant distances, such as $\partial_u$) need \emph{not} be separable.
  Such distances will be denoted throughout by $\partial$, to emphasise the fact that these are not the distances one is used to considering when dealing with Polish spaces and groups.
  For example, if $(G,\tau)$ is the unitary group $U(\ell_2)$ equipped with the strong operator topology, then $\partial_u$ is (equivalent to) the non separable operator norm.
\end{rmk}

\begin{dfn}
  A \emph{topometric action} of a Polish group $(G,\tau_G)$ on a topometric space $(X,\tau,\partial)$ is a continuous group action (i.e., $(g,x) \mapsto g {\cdot} x$ is continuous) which acts by $\partial$-isometries.
  Whenever $G$ acts topometrically on $X$, we shall consider the diagonal action of $G$ on $X^n$, which is also naturally a topometric action on $X^n$ ($X^n$ being endowed with the product topology and the supremum metric).
\end{dfn}

An interesting aspect of the metric in topometric spaces is that it enables us to enlarge sets slightly: if $(X,\tau,\partial)$ is a topometric space, $A \subseteq X$ and $\varepsilon >0$ then we define
$$(A)_\varepsilon= \{x \in X \colon \partial(x,A) < \varepsilon\}.$$

In the following, we focus on Polish topometric spaces and groups, that is to say we only consider topometric spaces and groups whose topology is Polish.
In this setting, our technical assumption on $\partial$ implies in particular that it is a Borel function from $X \times X$ to $\bR$, and so it is easy to check that, whenever $A \subseteq X$ is analytic and $\varepsilon >0$, $(A)_\varepsilon$ is also analytic.
Indeed, $(A)_\varepsilon$ is equal to the projection on the first coordinate of the following subset of $X \times X$:
\begin{gather*}
  \bigl\{ (x,y) \in X \times X \colon x \in X \text{ and } y \in A \text{ and } \partial(x,y)<\varepsilon \bigr\}.
\end{gather*}
This set is analytic, and the projection of an analytic subset of $X\times X$ is an analytic subset of $X$.
Consequently, the uniform closure of any analytic subset of $X$ is again analytic.

\subsection{Metric logical structures}

Let us begin with a convention that we shall follow throughout the paper: whenever $(M,d)$ is a metric space and $n \in \bN$, we endow $M^n$ with the supremum metric, which we also denote by $d$.

\begin{dfn}
  A \emph{metric structure} $\cM$ is a complete bounded metric space $(M,d)$, along with a family $(P_i)_{i \in I}$ of \emph{continuous predicates}, i.e., uniformly continuous bounded maps $P_i\colon M^{k_i} \to \bR$, and a family $(f_j)_{j \in J}$ of \emph{functions}, i.e., uniformly continuous maps $f_j\colon M^{\ell_j} \to M$ (where $k_i,\ell_j \in \bN$).
  We always assume that the distance function $d$ is included in our list of predicates.

  The structure is said to be \emph{Polish} if the underlying metric space is, that is, if $M$ is separable.

  If $\cM$ and $\cN$ are metric structures, an \emph{isomorphism} $\varphi\colon \cM \to \cN$ is a bijection $\varphi\colon M \to N$ which preserves all the predicates (including the distance) and functions.
  If $\cM = \cN$ then it is an \emph{automorphism}.
\end{dfn}

The functions of a metric structure are in some sense superfluous, since we may always replace a function $f(\bar x)$ with the predicate $G_f(\bar x,y) = d(f(\bar x),y)$ without changing the automorphism group (a structure involving no functions is said to be \emph{relational}).
For structures which arise ``naturally'', however (e.g., some of those mentioned in \fref{sec:Examples}), it is convenient to allow functions as well as predicates.

\begin{fct}
  Let $\cM$ be a Polish metric structure with universe $(M,d)$, and denote by $G = \Aut(\cM)$ its automorphism group, endowed with the point-wise convergence topology $\tau$.
  Then $G$ is a closed subgroup of $\Iso(M,d)$ and is therefore a Polish group.

  Let $\partial_u^\cM$ denote the distance of uniform convergence on $G$, namely
  \begin{gather*}
    \partial_u^\cM(g,h) = \sup_{x \in M} d(gx,hx).
  \end{gather*}
  Then the topology $\tau$ is naturally refined by the bi-invariant distance $\partial_u^\cM$, and $(G,\tau,\partial_u^\cM)$ is a topometric group.
\end{fct}

The following is less immediate.
We state it without proof, for the sake of completeness, and shall not make any real use of it (the crucial point is that by the metric Ryll-Nardzewski Theorem, the $\aleph_0$-categoricity assumption implies that the action of $G$ on $\cM$ is approximately oligomorphic, as per \fref{dfn:ApproximatelyOligomorphicAction}).
\begin{fct}
  \label{fct:Aleph0CatUniformity}
  Assume $\cM$ is a Polish metric structure whose continuous first order theory is $\aleph_0$-categorical, and let $(G,\tau,\partial_u^\cM) = \Aut(\cM)$ as above.
  Then $\partial_u^\cM$ defines the coarsest bi-invariant uniformity refining $\tau$, i.e., $\partial_u = \partial_u^\cM$.
\end{fct}

Conversely,

\begin{dfn}
  If $A \subseteq M$ is complete and closed under functions, we may equip $A$ with the \emph{induced structure} from $\cM$ in the natural way, making it a \emph{metric sub-structure} of $\cM$.
  In the case of an arbitrary subset $A \subseteq M$, we let $\langle A \rangle$ be the closure of $A$ under functions, observing that the (complete) metric closure $\overline{\langle A \rangle}$ is again closed under functions, and is therefore a metric sub-structure.

  We say that two tuples $\bar a$ and $\bar b$ in $M^n$ have \emph{the same quantifier-free type} if there exists an isomorphism $\varphi\colon \overline{\langle \bar a \rangle} \to \overline{\langle \bar b \rangle}$ sending each $a_i$ to $b_i$, observing that if such an isomorphism exists then it is unique.
  Notice that if $\cM$ is relational, then this is the same as saying that for every predicate $P$ taking $k$ arguments and every $\{j_0,\ldots,j_{k-1}\} \subseteq \{0,\ldots,n-1\}$ one has $P(a_{j_0},\ldots,a_{j_{k-1}}) = P(b_{j_0},\ldots,b_{j_{k-1}})$.

  A metric structure $\cM$ is \textit{approximately ultrahomogeneous} if for any $n$-tuples $\bar a$ and $\bar b$ with the same quantifier-free type and any $\varepsilon >0$ there exists $g \in \Aut(\cM)$ such that $d(ga_i,b_i) \le \varepsilon$ for all $i < n$.
\end{dfn}

\begin{fct}
  \label{fct:StructurePolishGroup}
  Any Polish group $G$ is isomorphic (as a topological group) to the automorphism group of some approximately ultrahomogeneous Polish metric structure $\cM$ in a countable language, such that in addition $\partial_u = \partial_u^\cM$.
\end{fct}
\begin{proof}[Sketch of proof -- see \cite{Melleray:Oscillation}]
  Let $G$ be a Polish group, and $d$ be a left-invariant distance compatible with the topology of $G$.
  We denote by $\hat{G}$ the metric completion of $(G,d)$, and let $G$ act on $\hat{G}$ by left-translation, and on $\hat{G}^n$ by the diagonal product of this action.
  For any $n$, and any $G$-orbit ${\mathcal O}$ in ${\hat G}^n$, we let $R_{\mathcal O}  \colon \hat{G}^n \to [0,1]$ be defined by
  $$R_{\mathcal O}(\bar x) = d(\bar x, \cO).$$
  This is a uniformly continuous function on ${\hat G}$, and one can consider the relational Polish metric structure
  \begin{gather*}
    {\mathcal G}=(\hat{G},d,\{R_{\mathcal O}\}).
  \end{gather*}
  It is then standard to check that $G$ is the automorphism group of ${\mathcal G}$, that $\partial_u = \partial_u^\cG$, and that ${\mathcal G}$ is approximately ultrahomogeneous.

  Since the distance to any orbit can be obtained as a uniform limit of distances to orbits in any given dense family, and since the family of orbits in $\hat G^n$ is separable, we can have essentially the same structure in a countable language.
  In addition, we can always truncate the distance at one, thereby making it bounded (however, while the boundedness of predicates is important when dealing with actual continuous logic, compactness, ultra-products, and so on, when dealing solely with automorphism groups it is of little or no significance).
\end{proof}

A word of caution is in order here: there may be more than one way to present a given Polish group $(G,\tau)$ as the automorphism group of some Polish metric structure $\cM$.
Attached to each such structure $\cM$ comes a topometric structure $(G,\tau,\partial_u^\cM)$, and \emph{different structures may induce different metrics on $G$}.
\fref{fct:StructurePolishGroup} merely assures us that the canonical uniform distance \emph{can} be obtained in this manner.

Metric logical structures, as presented here, were defined in \cite{BenYaacov-Usvyatsov:CFO} as a basis for the semantics of continuous first order logic (the reader may also consult \cite{BenYaacov-Berenstein-Henson-Usvyatsov:NewtonMS} for a general survey regarding metric model theory, for whose purpose continuous logic was introduced).
Since we shall not use the logic of these structures in this paper, the basic vocabulary presented above should be enough to understand what follows.

\section{Ample generics for topometric groups}
\label{sec:TopometricAmpleGenerics}

We start by recalling:
\begin{dfn}
  \label{dfn:TopometricAmpleGenerics}
  Let $G$ be a topometric group acting on a topometric space $X$.
  The action has \emph{ample generics} if for any $n \in \bN$ and any $\varepsilon >0$ there exists some $\bar x \in X^n$ such that $(G {\cdot} \bar x)_\varepsilon$ is co-meagre in $X^n$.
\end{dfn}

Let us make a few observations about this definition.
First, since $G$ acts by $\partial$-isometries, we have for all $\bar x, \bar y \in X^n$ and $\varepsilon > 0$,
\begin{gather*}
  \bar x \in (G {\cdot} \bar y)_\varepsilon \quad \Longleftrightarrow \quad \bar y \in (G {\cdot} \bar x)_\varepsilon \quad \Longrightarrow \quad \left(G {\cdot} \bar x \right)_\varepsilon \subseteq \left(G {\cdot} \bar y \right)_{2\varepsilon},
  \intertext{whereby, denoting by $\overline{\,\cdot\,}^\partial$ the $\partial$-closure,}
  \bar y \in \overline{G {\cdot} \bar x}^{\partial} \quad \Longleftrightarrow \quad \overline{G {\cdot} \bar x}^{\partial} \cap \overline{G {\cdot} \bar y}^{\partial} \ne \emptyset \quad \Longleftrightarrow \quad \overline{G {\cdot} \bar x}^{\partial}=\overline{G {\cdot} \bar y}^{\partial}.
\end{gather*}
If the action of $G$ on $X$ has ample generics, the remarks above show that for all $\varepsilon >0$ there is a co-meagre set of $\bar x \in X^n$ such that
$\left(G {\cdot} \bar x \right)_\varepsilon$ is co-meagre; taking the intersection for all rational $\varepsilon >0$, we obtain that there is a co-meagre set $\mathcal O_n$ in $X^n$ such that $\overline{G {\cdot} \bar x}^{\partial}$ is co-meagre for all $\bar x \in \mathcal O_n$.
In particular, having ample generics is equivalent to having orbits in each $X^n$ whose $\partial$-closure is co-meagre.

Elements of $\mathcal O_n$ are called \emph{generics}.
If $\bar x, \bar y \in \mathcal O_n$ then $\overline{G {\cdot} \bar x}^{\partial}$ and $\overline{G {\cdot} \bar y}^{\partial}$ must intersect since they are both co-meagre, whereby $\overline{G {\cdot} \bar x}^{\partial}=\overline{G {\cdot} \bar y}^{\partial}$.
For similar reasons, if $\bar x \in \cO_n$ and $\bar y \in \overline{G {\cdot} \bar x}^{\partial}$ then $\overline{G {\cdot} \bar x}^{\partial} = \overline{G {\cdot} \bar y}^{\partial}$ and thus $\bar y \in \cO_n$.
We conclude that $\cO_n = \overline{G {\cdot} \bar x}^{\partial}$ for any $\bar x \in \mathcal O_n$.

We shall not really be interested in general topometric actions of Polish topometric groups; rather, we shall focus on the action of $G$ on itself by conjugation.

\begin{dfn}
  Let $(G,\tau,\partial)$ be a Polish topometric group.
  We say that $(G,\tau,\partial)$ has \emph{ample generics} if the topometric action of $G$ on itself by conjugation has ample generics.
  If $\partial=\partial_u$, we simply say that $(G,\tau)$ has \emph{ample metric generics}.
\end{dfn}
Notice that since $\partial_u$ is always coarser than (or equivalent to) $\partial$, if $(G,\tau,\partial)$ has ample generics then so does $(G,\tau,\partial_u)$.
In other words, a Polish group $(G,\tau)$ has ample metric generics if and only if there exists $\partial$ such that $(G,\tau,\partial)$ is a topometric group with ample generics.

Our next goal is to extend the techniques of Kechris and Rosendal to the topometric setting.
We first need to establish a few technical lemmas, extending \cite[Lemmas 6.6 and 6.7]{Kechris-Rosendal:Turbulence}.
The proofs are very similar, but require one to keep account of some $\varepsilon$'s.

\begin{conv}
  For the rest of this section, $G$ is a Polish topometric group acting on a Polish topometric space $X$, and we assume that the action has ample generics.
  As above, we let $\cO_n \subseteq X^n$ denote the set of generic tuples, recalling that this is a co-meagre set, equal to $\overline{G {\cdot} \bar x}^\partial$ for any $\bar x \in \cO_n$.
\end{conv}

\begin{lem}
  \label{lem:0}
  Under our assumptions,
  \begin{gather*}
    \cO_n = \bigl\{ \bar x \in X^n \colon \forall^* y \in X \  (\bar x,y) \in \cO_{n+1} \bigr\}.
  \end{gather*}
\end{lem}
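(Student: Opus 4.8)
The plan is to prove the two inclusions separately, using Kuratowski--Ulam to move between "co-meagre in $X^{n+1}$" and "for co-meagrely many $\bar x$, co-meagre in the fibre". Throughout I will use the characterisation established just before the lemma: $\cO_{n+1} = \overline{G\cdot(\bar x_0, y_0)}^\partial$ for any generic $(\bar x_0, y_0)$, and similarly for $\cO_n$; also that $\cO_{n+1}$, being the $\partial$-closure of an orbit (hence of an analytic set), is analytic, therefore Baire-measurable, so Kuratowski--Ulam applies to it inside $X^n \times X$.

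For the inclusion $\subseteq$: fix $\bar x \in \cO_n$. Since $\cO_{n+1}$ is co-meagre in $X^{n+1} = X^n \times X$, Kuratowski--Ulam gives that $(\cO_{n+1})_{\bar x'} = \{y : (\bar x', y) \in \cO_{n+1}\}$ is co-meagre in $X$ for all $\bar x'$ in a co-meagre subset $C \subseteq X^n$. I would like to conclude it holds for \emph{our} $\bar x$, so I need to show the property "$\forall^* y\ (\bar x', y) \in \cO_{n+1}$" is conjugacy-invariant in $\bar x'$ and $\partial$-closed, so that it passes from the co-meagre set $C$ (which meets the orbit $G \cdot \bar x$, indeed $\cO_n = \overline{G\cdot \bar x}^\partial$) to all of $\cO_n$. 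Conjugacy-invariance: if $g\in G$ then $y \mapsto g\cdot y$ is a homeomorphism of $X$ carrying $(\cO_{n+1})_{\bar x'}$ to $(\cO_{n+1})_{g\cdot \bar x'}$ (because $g\cdot(\bar x', y)=(g\cdot\bar x', g\cdot y)$ and $\cO_{n+1}$ is $G$-invariant), so one fibre is co-meagre iff the other is. Closedness under $\partial$-limits is the point that genuinely uses $\partial$-lower semicontinuity: if $\partial(\bar x', \bar x'') < \varepsilon$ and $(\bar x', y) \in \cO_{n+1}$ then, since $\cO_{n+1}$ is a $\partial$-ball-closed set ($\cO_{n+1} = \overline{G\cdot(\bar x_0,y_0)}^\partial = \bigcap_\varepsilon (G\cdot(\bar x_0,y_0))_\varepsilon$ intersected appropriately — use that $(A)_\varepsilon$ for $\varepsilon$-enlargements behave well), $(\bar x'', y)$ lies in a $\partial$-enlargement of $\cO_{n+1}$; more cleanly, $\partial((\bar x',y),(\bar x'',y)) = \partial(\bar x',\bar x'')$, so if $\bar x''$ is a $\partial$-limit of generics $\bar x'$ each of whose fibres is co-meagre, then for each such $\bar x'$, $(\cO_{n+1})_{\bar x''} \supseteq$ something co-meagre obtained by translating, and one checks the fibre over the limit contains a co-meagre set. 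Since $\cO_n = \overline{G\cdot\bar x}^\partial$ and $C$ is co-meagre hence meets $G\cdot\bar x$, combining invariance and $\partial$-closedness shows every $\bar x\in\cO_n$ has co-meagre fibre, i.e. $\bar x$ belongs to the right-hand set.

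For the inclusion $\supseteq$: suppose $\bar x \in X^n$ satisfies $\forall^* y\ (\bar x, y)\in\cO_{n+1}$. First, projecting, $\bar x$ must itself be generic: the projection $\pi\colon X^{n+1}\to X^n$ is continuous and $\cO_{n+1}\subseteq \pi^{-1}(\cO_n)$ (if $(\bar x,y)$ is a generic $(n+1)$-tuple then $\bar x$ is a generic $n$-tuple, since conjugating and $\partial$-approximating the long tuple does the same to its truncation), so $(\bar x,y)\in\cO_{n+1}$ for \emph{some} $y$ already forces $\bar x\in\cO_n$. Hence the right-hand set is contained in $\cO_n$, and with the first inclusion we get equality. (Alternatively and more symmetrically: the right-hand set is $G$-invariant, $\partial$-closed, and non-meagre — it contains the co-meagre set $C'$ from Kuratowski--Ulam applied in the forward direction — and any $G$-invariant $\partial$-closed non-meagre set containing generics must equal $\cO_n$ by the uniqueness of the generic orbit-closure; but the projection argument is shorter.)

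**Main obstacle.** The delicate point is the passage from "co-meagre fibre for co-meagrely many $\bar x$" to "co-meagre fibre for \emph{every generic} $\bar x$": this is exactly where the topometric structure is used rather than the bare topology, and one must verify carefully that the predicate $\bar x \mapsto \big(\forall^* y\ (\bar x,y)\in\cO_{n+1}\big)$ is invariant under the $\partial$-closure operation, leaning on the $\tau$-lower-semicontinuity of $\partial$ and on the fact that $\cO_{n+1}$ is itself a $\partial$-closure of an orbit. I expect this bookkeeping with the $\varepsilon$-enlargements $(\cdot)_\varepsilon$ to be the only non-routine part; everything else is Kuratowski--Ulam plus continuity of the group action.
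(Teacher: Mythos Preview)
Your overall architecture matches the paper's: let $D$ be the right-hand side, show $D \subseteq \cO_n$ (your projection argument is fine), observe that $D$ is $G$-invariant and, by Kuratowski--Ulam, co-meagre, and show that $D$ is $\partial$-closed; then $D = \cO_n$ because any non-empty $G$-invariant $\partial$-closed subset of $\cO_n$ is all of $\cO_n$.

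Two points need correcting. First, the sentence ``$C$ is co-meagre hence meets $G\cdot\bar x$'' is unjustified: the orbit $G\cdot\bar x$ may well be meagre. What you actually need, and what your parenthetical alternative correctly states, is only that $D$ is non-empty, $G$-invariant and $\partial$-closed inside $\cO_n$; there is no reason to fix a particular $\bar x$ in advance. Second, and more importantly, your sketch of $\partial$-closedness --- which you rightly flag as the crux --- is off-track: there is no ``translating'' and no $\varepsilon$-bookkeeping needed. The argument is simply this. Suppose $\bar x_k \to \bar x$ in $\partial$ with each $\bar x_k \in D$. Each set $E_k = \{y : (\bar x_k,y)\in\cO_{n+1}\}$ is co-meagre, so $\bigcap_k E_k$ is co-meagre. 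For any $y$ in this intersection we have $(\bar x_k,y) \to (\bar x,y)$ in $\partial$, and since $\cO_{n+1}$ is $\partial$-closed, $(\bar x,y)\in\cO_{n+1}$. Thus the fibre over $\bar x$ contains $\bigcap_k E_k$ and is co-meagre, i.e.\ $\bar x\in D$. This is exactly the paper's proof.
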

\begin{proof}
  Let the set on the right hand side be denoted by $D$.
  Clearly $\cO_n \supseteq D$, and $D$ is $G$-invariant.
  By the Kuratowski-Ulam Theorem $D$ is co-meagre, and in particular non empty, so all we need to show is that $D$ is $\partial$-closed.
  Indeed, let $(\bar x_k)$ be a sequence in $D$ $\partial$-converging to $\bar x \in X^n$.
  Then
  $$\forall k \  \forall^* y \in X \  (\bar x_k,y) \in \cO_{n+1}.$$
  Since the intersection of countably many co-meagre sets is co-meagre,
  $$\forall^* y \in X \   \forall k \  (\bar x_k,y) \in \cO_{n+1}.$$
  Since $\cO_{n+1}$ is $\partial$-closed, this implies that $\bar x \in D$, and the proof is complete.
\end{proof}

\begin{lem}
  \label{lem:1}
  Let $A,B \subseteq X$ be such that $A$ is not meagre and $B$ is not meagre in any non empty open set.
  Then if $\bar x \in X^n$ is generic and $V$ is an open neighbourhood of the identity of $G$, there exist for any $\varepsilon>0$ some $y_0 \in A$, $y_1 \in B$, $h \in V$ such that $(\bar x,y_0)$ and $(\bar x,y_1)$ are generic and $\partial\bigl( h{\cdot}(\bar x,y_0), (\bar x,y_1) \bigr) < \varepsilon$.
\end{lem}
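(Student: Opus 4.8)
The plan is to fix once and for all a generic $y_0 \in A$, and then to show that the set of $y_1$ that work for this $y_0$ is comeagre in some non-empty open subset of $X$; since $B$ is non-meagre in every non-empty open set, it will then automatically meet that set. To set this up: since $\bar x$ is generic, \fref{lem:0} shows that $S := \{y \in X \colon (\bar x, y) \in \cO_{n+1}\}$ is comeagre in $X$. As $A$ is non-meagre and $X \setminus S$ is meagre, $A \cap S \ne \emptyset$, so I fix $y_0 \in A \cap S$; then $(\bar x, y_0)$ is generic. Likewise $B \cap S$ is non-meagre in every non-empty open set. Hence it suffices to prove that
$$T := \bigl\{ y \in X \colon (\bar x, y) \in (V \cdot (\bar x, y_0))_\varepsilon \bigr\}$$
is comeagre in some non-empty open $U \subseteq X$: for then any $y_1 \in T \cap (B \cap S)$, together with the $h \in V$ witnessing $y_1 \in T$, gives $y_0 \in A$, $y_1 \in B$, both $(\bar x, y_0)$ and $(\bar x, y_1)$ generic, and $\partial(h \cdot (\bar x, y_0), (\bar x, y_1)) < \varepsilon$.

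To show that $T$ is large I would first, as a substitute for Effros' theorem, prove that $P := (V_2 \cdot (\bar x, y_0))_{\varepsilon/2} \subseteq X^{n+1}$ is non-meagre, where $V_2$ is a symmetric open neighbourhood of $1_G$ with $V_2 V_2 \subseteq V$. Indeed, since $(\bar x, y_0)$ is generic we have $\cO_{n+1} = \overline{G \cdot (\bar x, y_0)}^\partial \subseteq (G \cdot (\bar x, y_0))_{\varepsilon/2}$; writing $G = \bigcup_m g_m V_2$ (by second countability of $G$) and using that $(\,\cdot\,)_{\varepsilon/2}$ commutes with countable unions and with the isometric action, we get $\cO_{n+1} \subseteq \bigcup_m g_m \cdot P$. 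As $\cO_{n+1}$ is non-meagre, so is some $g_m \cdot P$, hence so is $P$. Being an $\varepsilon/2$-enlargement of a continuous image of an open set, $P$ is analytic, hence Baire-measurable, hence comeagre in some non-empty open box $W_1 \times W_2 \subseteq X^n \times X$.

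The step I expect to be the main obstacle is transferring this largeness, which lives somewhere in $X^{n+1}$, to the fibre over $\bar x$ — which is exactly what Effros' theorem does for Kechris and Rosendal when $\varepsilon = 0$. By Kuratowski--Ulam I choose $\bar w \in W_1$ with $P_{\bar w}$ comeagre in $W_2$, then pick $y^* \in P_{\bar w}$ and $h^* \in V_2$ with $\partial\bigl((\bar w, y^*), h^* \cdot (\bar x, y_0)\bigr) < \varepsilon/2$, and put $p_1 := (h^*)^{-1} \bar w$, so that $\partial(p_1, \bar x) = \partial(\bar w, h^* \bar x) < \varepsilon/2$. Applying the $\tau$-homeomorphism and $\partial$-isometry $(h^*)^{-1}$, the fibre of $(h^*)^{-1} \cdot P = \bigl((h^*)^{-1} V_2 \cdot (\bar x, y_0)\bigr)_{\varepsilon/2}$ over $p_1$ equals $(h^*)^{-1} \cdot P_{\bar w}$, which is comeagre in the non-empty open set $(h^*)^{-1} W_2$; and because $(h^*)^{-1} V_2 \subseteq V_2 V_2 \subseteq V$, that fibre is contained in $\bigl\{ y \colon (p_1, y) \in (V \cdot (\bar x, y_0))_{\varepsilon/2} \bigr\}$.

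Finally I would observe that $\bigl\{ y \colon (p_1, y) \in (V \cdot (\bar x, y_0))_{\varepsilon/2} \bigr\} \subseteq T$: if $(p_1, y) \in (V \cdot (\bar x, y_0))_{\varepsilon/2}$, a witnessing $h \in V$ satisfies $\partial(h \bar x, p_1) < \varepsilon/2$ and $\partial(h y_0, y) < \varepsilon/2$, hence $\partial(h \bar x, \bar x) < \varepsilon$ by the triangle inequality, hence $(\bar x, y) \in (V \cdot (\bar x, y_0))_\varepsilon$, i.e.\ $y \in T$. Therefore $T$ contains a set comeagre in the non-empty open set $U := (h^*)^{-1} W_2$, which completes the reduction and hence the proof.
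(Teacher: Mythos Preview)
Your proof is correct, but it takes a somewhat different route from the paper's.

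The paper works entirely inside $X$ from the start. Having fixed $y_0 \in A$ with $(\bar x,y_0)$ generic (exactly as you do), it introduces the approximate stabiliser $G_{\bar x,\varepsilon} = \{g \in G : \partial(g{\cdot}\bar x,\bar x) < \varepsilon\}$ and observes directly that the comeagre set $E = S$ satisfies $E \subseteq (G_{\bar x,\varepsilon} {\cdot} y_0)_\varepsilon$. It then covers $G_{\bar x,\varepsilon}$ by countably many sets $g_nV \cap G_{\bar x,\varepsilon}$, so one of the sets $\bigl((g_nV \cap G_{\bar x,\varepsilon}){\cdot}y_0\bigr)_\varepsilon$ is non-meagre in $X$; translating by $g_n^{-1}$ (and widening $\varepsilon$ to $2\varepsilon$) gives $\bigl((V \cap G_{\bar x,2\varepsilon}){\cdot}y_0\bigr)_\varepsilon$ non-meagre, hence comeagre in some open set, where it meets $B \cap E$. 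No passage through $X^{n+1}$ and no Kuratowski--Ulam are needed.

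Your argument instead carries out the covering-by-translates step in $X^{n+1}$, showing $P = (V_2{\cdot}(\bar x,y_0))_{\varepsilon/2}$ is non-meagre there, and then uses Kuratowski--Ulam plus a translation by a witness $(h^*)^{-1}$ to descend to a fibre over some $p_1$ with $\partial(p_1,\bar x) < \varepsilon/2$, finishing with a triangle-inequality inclusion into $T$. This is a perfectly legitimate substitute for the Effros-type step you were worried about; it just costs an extra appeal to Kuratowski--Ulam and one more translation. The paper's approach is shorter because the approximate stabiliser $G_{\bar x,\varepsilon}$ lets it stay in the fibre over $\bar x$ throughout, never having to ``come back'' from a nearby point $p_1$.
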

\begin{proof}
  Fix an open neighbourhood $V$ of $1_G$, $n \in \bN$, a generic $\bar x \in X^n$ and $\varepsilon > 0$.
  Let
  \begin{gather*}
    G_{\bar x,\varepsilon}= \{g \in G \colon \partial(g{\cdot}\bar x,\bar x) < \varepsilon\}, \qquad E = \{y \in X \colon (\bar x,y) \in \cO_{n+1}\}.
  \end{gather*}
  By \fref{lem:0}, $E$ is co-meagre, so we may choose $y_0 \in E \cap A$.
  Notice that for any $y \in E$ we have $(\bar x,y) \in \cO_{n+1} = \overline{G {\cdot} (\bar x,y_0)}^\partial$, so there exists $g \in G_{\bar x,\varepsilon}$ such that $\partial(y, g {\cdot} y_0) < \varepsilon$.
  Thus we have
  \begin{gather*}
    E \subseteq \left(G_{\bar x, \varepsilon} {\cdot} y_0\right)_\varepsilon = \bigcup_{g \in G_{\bar x, \varepsilon}} \bigl( ( gV \cap G_{\bar x, \varepsilon} ) {\cdot} y_0 \bigr)_\varepsilon.
  \end{gather*}
  Since $V$ is open and $G_{\bar x, \varepsilon}$ has a countable basis, there must exist a sequence $(g_n) \subseteq G_{\bar x, \varepsilon}$ such that
  \begin{gather*}
    E \subseteq \bigcup_n \bigl( (g_nV \cap G_{\bar x, \varepsilon}){\cdot} y_0 \bigr)_\varepsilon.
  \end{gather*}
  Since $E$ is co-meagre, there exists some $n$ such that $\bigl( (g_nV \cap G_{\bar x,\varepsilon}) {\cdot} y_0 \bigr)_\varepsilon$ is non-meagre, and hence $\bigl( (V \cap G_{\bar x,2\varepsilon}) {\cdot} y_0 \bigr)_\varepsilon$ is also non-meagre.
  Since this set is analytic it must be Baire-measurable and so it is co-meagre in some non empty open subset of $X$, so it must intersect $B \cap E$.
  Let $y_1 \in  \bigl( (V \cap G_{\bar x,2\varepsilon}) {\cdot} y_0 \bigr)_\varepsilon \cap B \cap E$ and let $h \in V \cap G_{\bar x,2\varepsilon}$ be such that $\partial(h{\cdot} y_0,y_1)< \varepsilon$.

  Then $(\bar x,y_0)$ and $(\bar x,y_1)$ are generic, $h \in V$, $\partial(h{\cdot}\bar x,\bar x) < 2 \varepsilon$ and $\partial(h{\cdot} y_0,y_1)< \varepsilon$, which is enough.
\end{proof}

\begin{lem}
  \label{lem:2}
  Let $(A_n), (B_n)$ be two sequences of subsets of $X$ such that for any $n$ $A_n$ is not meagre and $B_n$ is not meagre in any non empty open set.
  Let also $(r_n)$ be a sequence of strictly positive reals.
  Then there exists a continuous map $a \mapsto h_a$ from $2^\bN$ into $(G,\tau)$ such that if $a\rest_n = b\rest_n$ and $a(n)=0$, $b(n)=1$ then $\partial(h_a {\cdot} A_n,h_b {\cdot} B_n)<r_n$.
\end{lem}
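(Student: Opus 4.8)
I would build a branching system $(g_s)_{s\in 2^{<\bN}}$ inside $(G,\tau)$, indexed by finite binary strings, together with a coherent family of generic tuples $(\bar x_s)$ obtained by repeatedly applying \fref{lem:1}, and then set $h_a=\lim_m g_{a\rest_m}$. Fix a complete compatible distance $d$ on the Polish space $(G,\tau)$, and fix positive reals $\varepsilon_m$ with $\varepsilon_m<r_m/4$ and $\sum_{\ell>m}\varepsilon_\ell<r_m/4$ for every $m$ (for instance $\varepsilon_m=2^{-m-3}\min\{r_0,\dots,r_m,1\}$). Construct by recursion on $|s|$ a generic tuple $\bar x_s\in\cO_{|s|}$, an element $g_s\in G$, and points $y^s_0,y^s_1\in X$, starting from $\bar x_{\langle\rangle}=\langle\rangle$ and $g_{\langle\rangle}=1_G$. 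At a node $s$ of length $n$, choose a neighbourhood $V_s$ of $1_G$ so small that $g_sV_s^{-1}$ lies in the $d$-ball of radius $2^{-n}$ about $g_s$, and apply \fref{lem:1} to the generic tuple $\bar x_s$, to $A_n$ (not meagre) and $B_n$ (not meagre in any non empty open set), to $V_s$, and to the tolerance $\varepsilon_n$: this yields $y^s_0\in A_n$, $y^s_1\in B_n$ and $h_s\in V_s$ with $(\bar x_s,y^s_0),(\bar x_s,y^s_1)\in\cO_{n+1}$ and $\partial\bigl(h_s{\cdot}(\bar x_s,y^s_0),(\bar x_s,y^s_1)\bigr)<\varepsilon_n$. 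Set $\bar x_{s0}=(\bar x_s,y^s_0)$, $\bar x_{s1}=(\bar x_s,y^s_1)$, $g_{s0}=g_s$ and $g_{s1}=g_sh_s^{-1}$.

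\paragraph{The map and its continuity.}
Since $g_{a\rest_{m+1}}$ equals $g_{a\rest_m}$ when $a(m)=0$ and lies in $g_{a\rest_m}V_{a\rest_m}^{-1}$ when $a(m)=1$, consecutive terms of $(g_{a\rest_m})_m$ are at $d$-distance $<2^{-m}$, so this sequence is $d$-Cauchy and converges to some $h_a\in G$. Moreover $h_a$ and $h_b$ are both within $d$-distance $2^{1-m}$ of $g_{a\rest_m}=g_{b\rest_m}$ whenever $a\rest_m=b\rest_m$, so $a\mapsto h_a$ is continuous from $2^\bN$ to $(G,\tau)$.

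\paragraph{The branching estimate.}
The crucial point, replacing the exactness available in the discrete case of Kechris--Rosendal, is that by \fref{lem:1} each $h_w$ $\partial$-almost fixes the whole tuple $\bar x_w$: from $\partial\bigl(h_w{\cdot}(\bar x_w,y^w_0),(\bar x_w,y^w_1)\bigr)<\varepsilon_{|w|}$ and the supremum metric one reads off $\partial(h_w{\cdot}\bar x_w,\bar x_w)<\varepsilon_{|w|}$, hence $\partial(h_w^{\pm1}{\cdot}z,z)<\varepsilon_{|w|}$ for every coordinate $z$ of $\bar x_w$; note also that by construction $y^s_0$ (resp.\ $y^s_1$) remains the coordinate of index $n$ of $\bar x_{s0}$ (resp.\ $\bar x_{s1}$), hence of every $\bar x_t$ with $s0\sqsubseteq t$ (resp.\ $s1\sqsubseteq t$). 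Now fix $n$ and $a,b$ with $s:=a\rest_n=b\rest_n$, $a(n)=0$, $b(n)=1$; then $a\rest_{n+1}=s0$, $g_{a\rest_{n+1}}=g_s$, and for $m>n$ we have $g_s^{-1}g_{a\rest_m}=h_{w_1}^{-1}\cdots h_{w_K}^{-1}$, where $w_1\sqsubset\cdots\sqsubset w_K$ enumerate the nodes $w$ with $s0\sqsubseteq w$ and $w$-followed-by-$1$ an initial segment of $a\rest_m$ (so each $|w_j|>n$). Applying $g_s^{-1}$ (a $\partial$-isometry) and then telescoping along the partial products $P_j=h_{w_1}^{-1}\cdots h_{w_j}^{-1}$ — each $P_{j-1}$ being a $\partial$-isometry, $\partial(P_j{\cdot}y^s_0,P_{j-1}{\cdot}y^s_0)=\partial(h_{w_j}^{-1}{\cdot}y^s_0,y^s_0)<\varepsilon_{|w_j|}$ since $y^s_0$ is a coordinate of $\bar x_{w_j}$ — we get
$\partial(g_{a\rest_m}{\cdot}y^s_0,g_s{\cdot}y^s_0)<\sum_j\varepsilon_{|w_j|}\le\sum_{\ell>n}\varepsilon_\ell<r_n/4$.
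Symmetrically $\partial(g_{b\rest_m}{\cdot}y^s_1,g_{s1}{\cdot}y^s_1)<r_n/4$, while $\partial(g_{s0}{\cdot}y^s_0,g_{s1}{\cdot}y^s_1)=\partial(y^s_0,h_s^{-1}{\cdot}y^s_1)=\partial(h_s{\cdot}y^s_0,y^s_1)<\varepsilon_n$ because $g_s,h_s$ act by $\partial$-isometries and by \fref{lem:1}. Adding the three estimates gives $\partial(g_{a\rest_m}{\cdot}y^s_0,g_{b\rest_m}{\cdot}y^s_1)<3r_n/4$ for all $m>n$; letting $m\to\infty$, using continuity of the action (so $g_{a\rest_m}{\cdot}y^s_0\to h_a{\cdot}y^s_0$ and $g_{b\rest_m}{\cdot}y^s_1\to h_b{\cdot}y^s_1$ in $\tau$) together with $\tau$-lower semi-continuity of $\partial$, we obtain $\partial(h_a{\cdot}y^s_0,h_b{\cdot}y^s_1)\le 3r_n/4<r_n$. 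Since $y^s_0\in A_n$ and $y^s_1\in B_n$ this says $\partial(h_a{\cdot}A_n,h_b{\cdot}B_n)<r_n$, as required.

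\paragraph{Main obstacle.}
The genuine difficulty, absent in the $\varepsilon=0$ case, is that in a topometric action a $\tau$-small element of $G$ need not move points of $X$ by a small $\partial$-amount, so one cannot repair the $\partial$-errors accumulated along a branch by multiplying by small group elements; all $\partial$-control must be squeezed out of the property of the $h_s$ in \fref{lem:1}. This is what dictates the choice $\bar x_{si}=(\bar x_s,y^s_i)$, so that the relevant level-$n$ point is transported down each subtree untouched and is hit by the ``$\partial$-almost fixes $\bar x_w$'' clause at every deeper node $w$, and it is also why the tolerances $\varepsilon_m$ must be taken summable against the $r_m$.
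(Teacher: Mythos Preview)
Your proof is correct and follows essentially the same approach as the paper's: build a tree of group elements $g_s$ (the paper calls them $h_s$) and generic tuples $\bar x_s$ by repeated application of \fref{lem:1}, arrange that passing to a child changes $g_s$ by a $d$-small factor, and control the $\partial$-drift via a summable sequence $(\varepsilon_n)$ using the ``$h_s$ $\partial$-almost fixes $\bar x_s$'' clause.  The only differences are cosmetic --- you set $g_{s1}=g_s h_s^{-1}$ where the paper sets $h_{s1}=h_s f_s$ (with the roles of $0$ and $1$ swapped in the call to \fref{lem:1}), and you telescope along the single coordinate $y^s_i$ while the paper carries the whole tuple $\bar x_{a\rest_m}$ through the estimate; the underlying computation is the same.
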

\begin{proof}
  First, fix a sequence of strictly positive reals $(\varepsilon_n)$ such that for all $n$
  $$2 \sum_{m \geq n} \varepsilon_m < r_n.$$
  Let $d$ be a complete distance in $G$ inducing $\tau$.
  We denote by $2^{< \omega}$ the set of finite binary sequences; for $s \in 2^{<\omega}$ we denote its length by $|s|$.

  We now define for any $s \in 2^{< \omega}$ elements $h_s \in G$ and $x_s \in X$ in such a way that
  \begin{enumerate}
  \item $\bar x_s = (x_{s\rest_1},\ldots, x_s)$ is generic.
  \item $x_{s \concat 0} \in A_{|s|}$ and $x_{s \concat 1} \in B_{|s|}$.
  \item \label{item:lem2.3}
    $h_{s \concat 0} = h_s$.
  \item $d(h_s,h_{s \concat 1})< 2^{-|s|}$.
  \item \label{item:lem2.5}
    $\partial(h_{s \concat 0} {\cdot} \bar x_{s \concat 0}, h_{s \concat 1} {\cdot} \bar x_{s \concat 1}) < \varepsilon_{|s|}$.
  \end{enumerate}
  We proceed by induction on $|s|$, starting with $h_\emptyset = \id$ and $x_\emptyset$ arbitrary since it is never used.
  Now assume that $h_s$ and $\bar x_s$ are given.
  Applying \fref{lem:1}, we can find $x_{s\concat 0}$, $x_{s \concat 1}$ and $f_s$ such that $\bar x_{s \concat 0}$, $\bar x_{s \concat 1}$ are generic, $d(h_s,h_sf_s) < 2^{-|s|}$, $x_{s\concat 0} \in A_{|s|}$, $x_{s\concat 1} \in B_{|s|}$, and finally $\partial(f_s {\cdot} \bar x_{s \concat 1}, \bar x_{s \concat 0}) < \varepsilon_{|s|}$.
  Then $h_{s\concat 0} = h_s$ and $h_{s\concat 1} = h_sf_s$ will do.

  Once the construction is done, for every $a \in 2^\bN$ we can let $h_a= \lim h_{a\rest_n}$, so the map $a \mapsto h_a$ is continuous.
  It follows from the properties above that $\partial(h_s {\cdot} \bar x_s, h_{s \concat i} {\cdot} \bar x_s) < \varepsilon_{|s|}$ for either value of $i \in \{0,1\}$, and the same hold \textit{a fortiori} if we replace $x_s$ with any initial segment thereof.
  Therefore, for all $m < n$,
  \begin{gather*}
    \partial(h_{a\rest_n} {\cdot} \bar x_{a\rest_m},h_{a\rest_m} {\cdot} \bar x_{a\rest_m}) \leq \sum_{m \leq k < n} \varepsilon_k < \sum_{m \leq k} \varepsilon_k.
  \end{gather*}
  Fixing $m$ and letting $n \to \infty$, and since $\partial$ is lower semi-continuous, we obtain
  \begin{gather*}
    \partial(h_a {\cdot} \bar x_{a\rest_m},h_{a\rest_m} {\cdot} \bar x_{a\rest_m}) \le \sum_{k \geq m} \varepsilon_k.
  \end{gather*}
  Now let $a,b \in 2^\bN$ and assume that $a\rest_n = b\rest_n = s$, $a(n)=0$, $b(n)=1$.
  Then
  \begin{align*}
    \partial(h_a {\cdot} A_n,h_b {\cdot} B_n) & \leq \partial(h_a {\cdot} \bar x_{s\concat 0},h_b {\cdot} \bar x_{s\concat 1}) \leq \partial(h_{s\concat 0} {\cdot} \bar x_{s\concat 0},h_{s\concat 1} {\cdot} \bar x_{s\concat 1}) + 2 \sum_{k> n} \varepsilon_k \leq \varepsilon_{n}+ 2 \sum_{k> n} \varepsilon_k < r_n,
  \end{align*}
  completing the proof.
\end{proof}

\section{Consequences of ample generics}
\label{sec:Consequences}

\subsection{Automatic continuity for topometric groups}

Recall the following definitions from \cite{Rosendal-Solecki:AutomaticContinuity}.

\begin{dfn}
  \label{dfn:Steinhaus}
  If $G$ is a group, we say that $A\subseteq G$ is \emph{$\sigma$-syndetic} if $G$ is covered by countably many left-translates of $A$.
  A topological group $G$ has the \emph{Steinhaus property} if there exists some integer $k$ such that, whenever $A$ is symmetric, $\sigma$-syndetic, we have $1_G\in \Int(A^k)$.
\end{dfn}

This notion was introduced in \cite{Rosendal-Solecki:AutomaticContinuity} to study automatic continuity properties of homomorphisms.
It is easy to see that if $G$ is a topological group with the Steinhaus property, then any homomorphism from $G$ to a second countable topological group $H$ is continuous (see \fref{prp:Steinhaus} below).
We introduce a topometric analogue of this property.

\begin{dfn}
  A topometric group has the \emph{Steinhaus property} if there exists some integer $k$ such that, whenever $A$ is symmetric, $\sigma$-syndetic, we have $1\in \Int \bigl( (A^k)_\varepsilon \bigr)$ for all $\varepsilon >0$ (the interior being with respect to the topology).
  We shall then say that $G$ is Steinhaus with exponent $k$.
\end{dfn}

\begin{prp}
  \label{prp:Steinhaus}
  Let $(G,\tau,\partial)$ be a Polish topometric group with the Steinhaus property, and $H$ be a secound countable topological group.
  Assume that $\varphi \colon (G,\partial) \to H$ is a continuous morphism.
  Then $\varphi \colon (G,\tau) \to H$ is continuous.
\end{prp}
\begin{proof}
  Let $G$ be Steinhaus with exponent $k$.
  Let $V$ be a neighbourhood of $1_H$ and pick some symmetric neighbourhood $W$ of $1_H$ such that $W^{k+1} \subseteq V$.
  Then there exists some $\varepsilon >0$ such that $(1_G)_\varepsilon \subseteq \varphi^{-1}(W)$.
  Also, since $\varphi^{-1}(W)$ is $\sigma$-syndetic, we know that
  \begin{gather*}
    1_G \in \Int\bigl( (\varphi^{-1}(W)^k)_\varepsilon \bigr) \subseteq (1_G)_\varepsilon {\cdot} \varphi^{-1}(W)^k \subseteq \varphi^{-1}(W^{k+1}) \subseteq \varphi^{-1}(V).
  \end{gather*}
  Thus $\varphi^{-1}(V)$ is a $\tau$-neighbourhood of $1_G$, which suffices for showing that $\varphi \colon (G,\tau) \to H$ is continuous.
\end{proof}

\begin{cor}
  If $(G,\tau,\partial)$ is a Polish topometric group with the Steinhaus property then for any Polish topology $\tau'$ such that $(G,\tau',\partial)$ is also a topometric group we must have $\tau=\tau'$.
\end{cor}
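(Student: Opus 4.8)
The plan is to establish the two inclusions $\tau' \subseteq \tau$ and $\tau \subseteq \tau'$ separately, the first via \fref{prp:Steinhaus} applied to the identity map, and the second via Pettis' theorem.

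\emph{First inclusion, $\tau' \subseteq \tau$.} Since $\partial$ is bi-invariant it induces a group topology, so $(G,\partial)$ is a topological group and the identity map $\mathrm{id}\colon G \to G$ is a group morphism. By definition of the topometric space $(G,\tau',\partial)$, the topology induced by $\partial$ refines $\tau'$; this is precisely to say that $\mathrm{id}\colon(G,\partial)\to(G,\tau')$ is continuous. Moreover $(G,\tau')$, being Polish, is in particular a second countable topological group. As $(G,\tau,\partial)$ has the Steinhaus property, we may apply \fref{prp:Steinhaus} to the continuous morphism $\varphi = \mathrm{id}\colon(G,\partial)\to(G,\tau')$ and conclude that $\mathrm{id}\colon(G,\tau)\to(G,\tau')$ is continuous, i.e., $\tau'\subseteq\tau$.

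\emph{Second inclusion, $\tau \subseteq \tau'$.} The map $\mathrm{id}\colon(G,\tau)\to(G,\tau')$ is now a continuous bijective morphism between two Polish groups, hence a homeomorphism by the consequence of Pettis' theorem recalled in \fref{sec:Polish}. Therefore $\tau=\tau'$.

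There is essentially no obstacle here beyond observing that \fref{prp:Steinhaus} should be applied to the identity morphism; all the real content sits in that proposition. The one point that requires a moment's care is recognising that $\mathrm{id}\colon(G,\partial)\to(G,\tau')$ is automatically continuous — but this is exactly the requirement, built into the definition of the topometric space $(G,\tau',\partial)$, that the $\partial$-topology refine $\tau'$ — together with the fact that $(G,\tau')$ qualifies as a second countable topological group, which follows from $\tau'$ being Polish and from $(G,\tau')$ being a topological group as part of the data of the topometric group $(G,\tau',\partial)$.
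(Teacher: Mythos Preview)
Your proof is correct and is precisely the intended argument: the paper states this corollary without proof, as an immediate consequence of \fref{prp:Steinhaus}, and the natural way to extract it is exactly your application of that proposition to the identity morphism followed by the standard Pettis-theorem fact that a continuous bijective morphism between Polish groups is a homeomorphism.
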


\begin{thm}
  Let $G$ be a topometric Polish group with ample generics.
  Then $G$ is Steinhaus with exponent $10$.
\end{thm}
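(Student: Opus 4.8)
The plan is to run, in the topometric setting, the proof of the discrete statement that a Polish group with ample generics is Steinhaus with exponent $10$ \cite{Kechris-Rosendal:Turbulence}, carrying the metric errors along; as noted in the introduction, deleting every $\varepsilon$ below recovers that proof verbatim. The metric bookkeeping rests on the bi-invariance of $\partial$, used throughout in the two forms $\partial(kxk^{-1},kyk^{-1})=\partial(x,y)$ and $\partial(x,y)=\partial(xy^{-1},1_G)$, so that conjugations and translations are free of charge and only products accumulate error.

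Fix a symmetric $\sigma$-syndetic $W\subseteq G$, write $G=\bigcup_i g_iW$, and fix $\varepsilon>0$; we must show $1_G\in\Int\bigl((W^{10})_\varepsilon\bigr)$. First I would extract the soft consequences of $\sigma$-syndeticity: $G$ being Baire, $W$ — and hence every $W^k$ — is non-meagre, and by the Banach category theorem $W$ is non-meagre in every non-empty open subset of some non-empty open set. Out of $W$, its powers, and the set $\cO_1$ of generic elements one then builds two sequences $(A_n),(B_n)$ of subsets of $G$, with $A_n$ non-meagre and $B_n$ non-meagre in every non-empty open set, tailored so that the element produced at the end of the argument lands in $W^{10}$; spending the exponent correctly here is the one delicate point. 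I would then feed these into \fref{lem:2} applied to the conjugation action of $G$ on itself, with $(r_n)$ strictly decreasing to $0$ and $r_0<\varepsilon$, obtaining a continuous $a\mapsto h_a\colon 2^\bN\to(G,\tau)$ that $\partial$-controls branching pairs.

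Next comes the second use of $\sigma$-syndeticity. The sets $\{a\in 2^\bN\colon h_a\in g_iW\}$ cover the Baire space $2^\bN$, so one of them, say the $i$-th, is non-meagre; it need not be Baire-measurable, but being non-meagre it is uncountable, hence contains two distinct points $a\ne b$. Let $n$ be the first coordinate at which they differ; exchanging $a$ and $b$ if need be, $a(n)=0$, $b(n)=1$, and $a\rest_n=b\rest_n$. Then \fref{lem:2} yields $\alpha\in A_n$, $\beta\in B_n$ with $\partial(h_a\alpha h_a^{-1},h_b\beta h_b^{-1})<r_n<\varepsilon$; writing $h_a=g_iw_1$, $h_b=g_iw_2$ with $w_1,w_2\in W$ and cancelling the shared conjugation by $g_i$ via bi-invariance, this becomes $w_1\alpha w_1^{-1}w_2\beta^{-1}w_2^{-1}\in(1_G)_\varepsilon$, and by the design of $A_n,B_n$ the left side lies in $W^{10}$. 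That only gives $1_G\in(W^{10})_\varepsilon$; to obtain the interior one runs the usual Pettis-type argument, but — and this is the whole reason \fref{lem:2} is packaged as a continuous map out of $2^\bN$ — on analytic sets built from $\{h_a\colon a\in 2^\bN\}$ rather than directly on $W$: such sets and their $\partial$-neighbourhoods are analytic, hence Baire-measurable, whereas $W$ need not be. Letting $\varepsilon$ vary finishes the proof.

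The main obstacle is exactly that last step: as $W$ is merely symmetric and $\sigma$-syndetic, neither Pettis' theorem nor the Baire-measurable form of the Kuratowski--Ulam theorem applies to $W$ or its powers, so the open set in the conclusion has to be produced by the tree construction of \fref{lem:2} together with Baire category in $2^\bN$; carrying this out while holding the exponent at $10$ is the finicky part. The remaining, ubiquitous difficulty is the $\varepsilon$-accounting, which is manageable only because $\partial$ is bi-invariant — conjugations free, products charged additively — and it is this that forces the choice of a sequence $(r_n)$ decreasing fast enough to keep the total accumulated error below $\varepsilon$.
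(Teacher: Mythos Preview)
Your sketch has the right ingredients but the wrong architecture, and this leads to a genuine gap. The paper's argument (which is the Kechris--Rosendal one with $\varepsilon$'s) is a proof \emph{by contradiction}: either $(W^5)_\varepsilon$ is co-meagre in some non-empty open set for every $\varepsilon>0$, in which case Pettis' theorem immediately gives $1_G\in\Int\bigl((W^5)_\varepsilon\cdot((W^5)_\varepsilon)^{-1}\bigr)\subseteq\Int\bigl((W^{10})_{2\varepsilon}\bigr)$, or else there is some $r>0$ for which $(W^5)_r$ is co-meagre in no open set, and \emph{then} one sets $B_n=G\setminus(W^5)_r$ (non-meagre in every open set precisely by this hypothesis), $A_n=W$, $r_n=r$, runs \fref{lem:2}, finds $a,b$ in a common coset $g_iW$, and derives $\partial(W^5,B_n)<r$, a contradiction. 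The interior in the conclusion is produced by Pettis in the first branch; the tree construction only serves to exclude the second.

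Your direct approach founders at exactly the two places where this dichotomy does work. First, you never say what $B_n$ is, and there is no candidate: to apply \fref{lem:2} you need $B_n$ non-meagre in \emph{every} open set, and without the contradiction hypothesis no combination of $W$, its powers, and $\cO_1$ will give you that (your Banach-category remark only yields non-meagreness of $W$ in every open subset of \emph{some} open set, which is the hypothesis on $A_n$, not $B_n$). Second, even granting a miraculous $B_n$, your output is a single relation $w_1\alpha w_1^{-1}w_2\beta^{-1}w_2^{-1}\in(1_G)_\varepsilon$; this says $1_G\in(W^{10})_\varepsilon$, not that $1_G$ is interior, and there is no ``Pettis-type argument on analytic sets built from $\{h_a\}$'' that upgrades a point to a neighbourhood. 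Note also that your worry about Pettis is misplaced: as stated in \fref{sec:Polish}, Pettis applies to any set co-meagre in a non-empty open set, with no Baire-measurability hypothesis, so once one knows $(W^5)_\varepsilon$ is co-meagre in some open set the conclusion is immediate. The fix is simply to restructure the proof as a dichotomy.
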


This is a corollary of the following result (again an analogue of a theorem due to Kechris and Rosendal), which we shall also use in \fref{sec:Bergman}.

\begin{thm}
  \label{thm:Cosets}
  Let $G$ be a Polish topometric group with ample generics.
  Let $k_n \in G$ and $A_n \subseteq G$ for $n \in \bN$, such that $\bigcup_{n \in \bN} k_n A_n$ is co-meagre in $G$.
  Then there is some $n$ such that for all $\varepsilon > 0$,
  \begin{gather*}
    1_g \in \Int\bigl( (A_n^{-1}A_nA_n^{-1}A_n^{-1}A_nA_n^{-1}A_nA_nA_n^{-1}A_n)_\varepsilon \bigr).
  \end{gather*}
\end{thm}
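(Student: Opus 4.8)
The plan is to transport to the topometric setting the proof of the corresponding ``coset theorem'' of Kechris and Rosendal \cite{Kechris-Rosendal:Turbulence} --- with $\varepsilon=0$ this is literally their statement --- whose backbone is a Cantor-scheme construction for which \fref{lem:2} is the intended substitute. The one genuinely new ingredient is the propagation of the $\varepsilon$'s, which, as in the proof of \fref{lem:2}, is controlled by the $\tau$-lower semicontinuity of $\partial$. To set up: write $W_n = k_n A_n$; a countable union of meagre sets being meagre, we may discard every $n$ for which $W_n$ is meagre without affecting the hypothesis, so assume each $W_n$ is non-meagre and put $D = \bigcup_n W_n$, which is co-meagre in $G$ and hence non-meagre in every non-empty open subset of $G$. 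We use throughout the conjugation action of $G$ on $X = G$ (so $g {\cdot} x = gxg^{-1}$), for which ample generics provides the co-meagre sets $\cO_m \subseteq G^m$ of generic tuples; recall also from \fref{sec:Prelim} that the $\partial$-fattening of an analytic set is analytic, so every subset of $G$ produced below is Baire-measurable, to which Pettis' theorem and the Kuratowski--Ulam theorem apply freely.

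Fix a sequence $(r_n)$ of positive reals decreasing fast enough (to be determined at the end) and run the construction in the proof of \fref{lem:2} with $A_n := W_n$ and $B_n := D$. This produces a continuous $a \mapsto h_a \colon 2^\bN \to (G,\tau)$ together with generic tuples $\bar x_s$ ($s \in 2^{<\omega}$) whose last coordinates satisfy $x_{s\concat 0} \in W_{|s|}$ and $x_{s\concat 1} \in D$, and the estimates
\begin{gather*}
  \partial\bigl(h_{s\concat 0}{\cdot}\bar x_{s\concat 0},\,h_{s\concat 1}{\cdot}\bar x_{s\concat 1}\bigr) < \varepsilon_{|s|} \qquad\text{and}\qquad \partial\bigl(h_a{\cdot}\bar x_{a\rest_m},\,h_{a\rest_m}{\cdot}\bar x_{a\rest_m}\bigr) \le \sum_{k\ge m}\varepsilon_k
\end{gather*}
recorded there, the auxiliary $(\varepsilon_n)$ satisfying $\sum_{m\ge n}\varepsilon_m < r_n/2$. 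As $x_{s\concat 1} \in D = \bigcup_m W_m$, fix $\nu$ with $x_{s\concat 1} \in W_{\nu(s)}$. A Baire-category argument now pins down the index: the sets $E_n = \{a \in 2^\bN : \nu(a\rest_k) = n \text{ for infinitely many } k\}$ are $G_\delta$ and cover $2^\bN$, so some $E_n$ is non-meagre, hence co-meagre in a non-empty clopen $[t]$; fix this $n$ and $t$. Unfolding the co-meagreness of $E_n$ in $[t]$ shows that, below $t$, the nodes $s$ with $x_s \in W_n$ are dense in the tree.

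Finally one assembles the word. Whenever $y,y' \in W_n$ one has $y^{-1}y' \in A_n^{-1}A_n$, so the factor $k_n$ disappears; using the density just obtained one picks a bounded number of nodes below $t$, chains along them the comparisons ``$h{\cdot}\bar x$ is $\partial$-close to $h'{\cdot}\bar x'$'' coming from \fref{lem:2} --- arranging, exactly as in \cite{Kechris-Rosendal:Turbulence}, that every sample point and every conjugating element entering the chain has the form $y^{-1}y'$ with $y,y' \in W_n$, hence is a letter of $A_n^{-1}A_n$ --- and telescopes the $\partial$-estimates to obtain an analytic $C \subseteq G$ and a $\delta > 0$ with $(C)_\delta$ non-meagre and
\begin{gather*}
  C^{-1}C \subseteq \bigl(A_n^{-1}A_nA_n^{-1}A_n^{-1}A_nA_n^{-1}A_nA_nA_n^{-1}A_n\bigr)_{\varepsilon},
\end{gather*}
where $\varepsilon$ is as small as desired once the $r_k$ are small. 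Since $(C)_\delta$ is analytic and non-meagre it is co-meagre in some non-empty open set, so by Pettis' theorem $(C)_\delta^{-1}(C)_\delta$ is a $\tau$-neighbourhood of $1_G$; and since $\partial$ is bi-invariant, $(C)_\delta^{-1}(C)_\delta \subseteq (C^{-1}C)_{2\delta} \subseteq \bigl(A_n^{-1}A_nA_n^{-1}A_n^{-1}A_nA_n^{-1}A_nA_nA_n^{-1}A_n\bigr)_{\varepsilon+2\delta}$. As $\varepsilon$ and $\delta$ are arbitrary, $1_G \in \Int\bigl((A_n^{-1}A_nA_n^{-1}A_n^{-1}A_nA_n^{-1}A_nA_nA_n^{-1}A_n)_\varepsilon\bigr)$ for every $\varepsilon > 0$, which is the claim.

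The step I expect to be the real obstacle is this last one: choosing the configuration of tree-nodes so that the chained comparisons assemble into \emph{exactly} the ten-letter word of the statement --- this is where the exponent $10$ and the particular pattern of inverses come from --- and checking that, telescoped through the lower semicontinuity of $\partial$, all the $r_k$- and $\varepsilon_k$-errors stay below the prescribed tolerance. Everything else is the $\varepsilon=0$ argument of \cite{Kechris-Rosendal:Turbulence} read essentially verbatim.
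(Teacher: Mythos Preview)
Your outline diverges from the paper's argument precisely at the step you flag as the obstacle, and the divergence is a genuine gap: with the setup you have built there is no mechanism to force the \emph{conjugating elements} $h_a$ into a single coset of $A_n$, and without that the ten-letter word cannot be assembled.

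The paper does not attempt a direct construction. It argues by contradiction: if the conclusion fails, then for \emph{every} $n$ there exists $r_n>0$ with $(A_n^{-1}A_nA_nA_n^{-1}A_n)_{r_n}$ not co-meagre in any open set. After arranging that each $A_n$ is non-meagre and repeated infinitely often, one applies \fref{lem:2} with $B_n = G\setminus (A_n^{-1}A_nA_nA_n^{-1}A_n)_{r_n}$ --- not with $B_n=D$ as you do. Since $\partial(A_n,B_n)\ge r_n$, the map $a\mapsto h_a$ is \emph{injective}, hence a topological embedding of $2^\bN$ onto a Cantor set $H\subseteq G$. Now Kuratowski--Ulam applied to the open continuous map $(g,h)\mapsto g^{-1}h$ from $G\times H$ to $G$ yields a single $g\in G$ with $h\in g\bigcup_n k_nA_n$ for co-meagrely many $h\in H$; by Baire category some index $n$ has $\{a:h_a\in gk_nA_n\}$ dense in a basic clopen $[t]$. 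The repetition trick lets one pick a level $m>|t|$ with $A_m=A_n$ and then two branches $a,b$ splitting at level $m$ with $h_a,h_b\in gk_nA_m$. This gives $h_b^{-1}h_a\in A_m^{-1}A_m$, whence $h_b^{-1}h_aA_mh_a^{-1}h_b\subseteq A_m^{-1}A_mA_mA_m^{-1}A_m$, contradicting $\partial(h_aA_mh_a^{-1},h_bB_mh_b^{-1})<r_m$. The ten-letter word never needs to be ``assembled'': it is simply $C_n^{-1}C_n$ for $C_n=A_n^{-1}A_nA_nA_n^{-1}A_n$, and Pettis produces it automatically once one knows $(C_n)_\varepsilon$ is co-meagre in an open set for all $\varepsilon$.

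There are two concrete problems with your route. First, the quantifiers are circular: you fix $(r_k)$, run the construction to obtain $n$, and then claim ``$\varepsilon$ is as small as desired once the $r_k$ are small'' --- but changing the $r_k$ re-runs the whole construction and may change $n$, so you do not get one $n$ working for all $\varepsilon$. The contradiction framing fixes this cleanly: negating $\exists n\,\forall\varepsilon$ yields $\forall n\,\exists r_n$, and those $r_n$ are what one feeds into \fref{lem:2}. Second, and more seriously, your application of \fref{lem:2} with $A_n=W_n$, $B_n=D$ gives you information about the \emph{sample points} $x_{s\concat i}$ but none about the \emph{conjugators} $h_a$. Your Baire argument on the $E_n$ only aligns the indices $\nu(s)$ of the sample points; it gives no reason whatsoever for $h_b^{-1}h_a$ to lie in $A_n^{-1}A_n$, and without that the conjugate $h_b^{-1}h_a\,x\,h_a^{-1}h_b$ cannot be bounded by any fixed word in $A_n$. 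The Kuratowski--Ulam step (together with the injectivity of $a\mapsto h_a$, which you also lack since $\partial(W_n,D)=0$) is exactly the missing ingredient that places the $h_a$'s in a common coset.
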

\begin{proof}
  If there is some $n$ such that $(A_n^{-1}A_nA_nA_n^{-1}A_n)_\varepsilon$ is co-meagre in some non empty open set for all $\varepsilon >0$, then Pettis' theorem yields the desired result.
  We may therefore assume there exists for each $n$ some $r_n>0$ such that $(A_n^{-1}A_nA_nA_n^{-1}A_n)_{r_n}$ is not co-meagre in any non empty open set.
  We may also assume that each $A_n$ is non meagre, and repeated in the sequence infinitely often.

  Let $B_n = G \setminus (A_n^{-1}A_nA_nA_n^{-1}A_n)_{r_n}$, and apply \fref{lem:2} to $(A_n)$, $(B_n)$, $(r_n)$ to obtain a continuous map $a \mapsto h_a$ from $2^\bN$ into $G$ as stated there.
  If $a,b \in 2^\bN$ are distinct, say $a\rest_n = b\rest_n$ and $a(n) = 0$, $b(n) = 1$, then  $\partial(h_aA_nh_a^{-1},h_bB_nh_b^{-1})<r_n$.
  Since $\partial(A_n,B_n) \ge r_n$ and $\partial$ is bi-invariant, we must have $h_a \ne h_b$.
  Therefore $a \mapsto h_a$ is injective, and being continuous from a compact space to a Hausdorff one, it is a homeomorphic embedding, with image $H = \{h_a \colon a \in 2^{\bN}\}$.
  Let also $A = \bigcup_n k_n A_n$, which is, by hypothesis, co-meagre.
  Since the map $(g,h) \mapsto g^{-1}h$ is open and continuous from $G \times H$ into $G$, and the inverse image of a co-meagre set by such a map is co-meagre, Kuratowski-Ulam yields
  \begin{gather*}
    \forall^* g \in G \ \forall^* h \in H \ g^{-1}h \in A.
  \end{gather*}
  In particular, there exists $g \in G$ such that
  $$\forall^{*} h \in H \  h \in gA.$$
  Then there exists $n$ such that $\{a \in 2^\bN \colon h_a \in g k_n A_n\}$ is non-meagre.
  In particular, its closure cannot have empty interior and so this set must be dense in $V_t=\{a \in 2^\bN \colon t \subseteq a\}$ for some $t \in 2^{<\omega}$.
  Choose $m > |t|$ such that $A_n=A_m$, and let $s$ be any extension of $t$ of length $m$.
  Then there exist $a$ and $b$ such that $a\rest_m = b\rest_m = s$, $a(m)=0$, $b(m)=1$ and $h_a,h_b \in g k_n A_n = g k_n A_m$.
  Set $h_a = g k_n h_1$ and $h_b=g k_n h_2$.
  Then
  $$h_b^{-1}h_aA_mh_a^{-1}h_b=h_2^{-1}h_1A_mh_1^{-1}h_2 \subseteq A_m^{-1}A_mA_mA_m^{-1}A_m.$$
  Thus, $\partial(h_b^{-1}h_aA_mh_a^{-1}h_b,B_m) >r_n$, contradicting the choice of $h_a,h_b$ and completing the proof.
\end{proof}

As far as automatic continuity theorems go, one can actually do better.
Remember that a group $G$ has \emph{uniform Souslin number} $\kappa$ if $\kappa$ is the least cardinal such that, for any neighbourhood $V$ of $1_G$, $G$ can be covered by less than $\kappa$ many left translates of $V$.
Equivalently, $\kappa$ is the least cardinal such that for any open neighbourhood $V$ of $1_H$ there do not exist $\kappa$ disjoint left translates of $V$.
The techniques of \cite{Kechris-Rosendal:Turbulence} yield the following theorem.

\begin{thm}
  \label{thm:AutomaticContinuity}
  Let $(G,\tau,\partial)$ be a Polish topometric group with ample generics, $H$ a topological group with uniform Souslin number $<2^{\aleph_0}$, and $\varphi \colon G \to H$ a morphism that is continuous from $(G,\partial)$ into $H$.
  Then $\varphi$ is continuous from $(G,\tau)$ to $H$.
\end{thm}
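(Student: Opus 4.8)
The plan is to mimic the proof of \fref{thm:Cosets}, but to exploit the homeomorphic embedding $a \mapsto h_a$ of $2^\bN$ into $G$ more efficiently, using a cardinality/measure-of-covering argument in place of Baire category on $2^\bN$. The point is that the Souslin number hypothesis on $H$ will be used to force a coincidence $\varphi(h_a) = \varphi(h_b)$ for suitable $a \ne b$, which then yields the required interior statement via Pettis. So first I would fix a symmetric open neighbourhood $V$ of $1_H$, and I would want to show that $\varphi^{-1}(V^m)$ (for some fixed $m$, say $m = 10$ as in \fref{thm:Cosets}) is a $\tau$-neighbourhood of $1_G$; for this it suffices, arguing as in \fref{prp:Steinhaus}, to show that $1_G \in \Int\bigl( (W_n)_\varepsilon \bigr)$ for all $\varepsilon>0$, where $W_n$ is the appropriate length-$10$ word in $A_n := \varphi^{-1}(V)$ and some $A_n$ from a covering of $G$. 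The set $A = \varphi^{-1}(V)$ is $\sigma$-syndetic because $H$ has uniform Souslin number $<2^{\aleph_0}$ — in fact it is covered by $<2^{\aleph_0}$ left translates, but since $G$ is Polish (hence Lindelöf-type) countably many suffice, so $\bigcup_n k_n A_n$ is all of $G$ with $A_n = A$ for every $n$.

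Next I would run the construction of \fref{lem:2} exactly as in \fref{thm:Cosets}: assuming toward a contradiction that for each $n$ there is $r_n>0$ with $(A_n^{-1}A_nA_nA_n^{-1}A_n)_{r_n}$ not co-meagre in any nonempty open set, set $B_n$ to be the complement of this enlargement, apply \fref{lem:2} to $(A_n),(B_n),(r_n)$, and obtain the continuous injection $a \mapsto h_a$ with image $H_0 = \{h_a : a \in 2^\bN\}$, a homeomorphic copy of the Cantor set inside $(G,\tau)$. As in \fref{thm:Cosets}, $\partial(h_aA_nh_a^{-1}, h_bB_nh_b^{-1}) < r_n$ whenever $a\rest_n = b\rest_n$, $a(n)=0$, $b(n)=1$; consequently, if $h_a$ and $h_b$ lie in the same coset $gk_nA_n$ for such a pair $(a,b)$, we reach the same contradiction $\partial(h_b^{-1}h_aA_mh_a^{-1}h_b, B_m) > r_n$ as before. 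So the whole argument comes down to producing $g \in G$, $n \in \bN$, and a splitting pair $a\rest_n = b\rest_n$, $a(n) = 0$, $b(n) = 1$ with $h_a, h_b \in gk_nA_n$.

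Here is where the Souslin number enters and replaces the Kuratowski–Ulam step. Consider the images $\varphi(h_a) \in H$ for $a \in 2^\bN$. If all these $2^{\aleph_0}$ images were such that no two of them differed by an element of a fixed small translate, one could extract $2^{\aleph_0}$ pairwise-disjoint left translates of a neighbourhood of $1_H$, contradicting the hypothesis that the uniform Souslin number of $H$ is $<2^{\aleph_0}$. More precisely: for each open symmetric $U \ni 1_H$, the relation $\varphi(h_a)^{-1}\varphi(h_b) \in U$ partitions (up to the usual translate-counting) $2^\bN$; since one cannot have $2^{\aleph_0}$ many $U$-separated points in $H$, one finds, for every $U$, two indices $a \ne b$ with $\varphi(h_a)^{-1}\varphi(h_b) \in U$, and by a tree/\,$\Delta$-system style refinement one can arrange the coincidence to happen at a prescribed splitting node, i.e.\ with $a\rest_n = b\rest_n$, $a(n)=0$, $b(n)=1$. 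Letting $U$ shrink to $\{1_H\}$ along the relevant finitely many words and translating back by $\varphi$ being a morphism continuous from $(G,\partial)$, one pins $h_a, h_b$ into a common coset $gk_nA_n$ as needed, which is the contradiction. I expect the main obstacle to be exactly this last step: turning ``$H$ has small uniform Souslin number'' into ``two points of the Cantor set $H_0$ that split at a controlled node get identified by $\varphi$ modulo an arbitrarily small neighbourhood'' — one must be careful to localise the coincidence to a single splitting node $s \in 2^{<\omega}$ (so that the word-in-$A_n$ estimate applies) rather than to some uncontrolled pair, and this requires a Cantor-scheme refinement argument nested inside the cardinal bound, which is the genuinely new ingredient beyond \fref{thm:Cosets}.
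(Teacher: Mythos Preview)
Your overall shape --- run \fref{lem:2} and then use the Souslin hypothesis to force two $h_a$'s into the same coset --- is exactly the paper's, but there are two genuine gaps, and the step you flag as ``the main obstacle'' is in fact trivial once things are set up correctly.

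First, and crucially, to invoke \fref{lem:2} you need each $A_n$ to be non-meagre, and you never establish this. Your argument that ``countably many translates of $A$ suffice because $G$ is Polish (hence Lindel\"of-type)'' is simply wrong: Lindel\"of applies to open covers, and the cosets $kA$ are only $\partial$-open (since $\varphi$ is only assumed $\partial$-continuous), not $\tau$-open. So you do not get $\sigma$-syndeticity, and you cannot feed into \fref{thm:Cosets}. The paper instead takes $A = \varphi^{-1}(V^2)$ and shows directly that $A$ is non-meagre, via the Kuratowski--Mycielski theorem: if $A$ were meagre there would be a Cantor set $C \subseteq G$ with $g^{-1}h \notin A$ for all distinct $g,h \in C$, and then $\{\varphi(g)V : g \in C\}$ would be $2^{\aleph_0}$ pairwise disjoint left translates of $V$ in $H$, contradicting the Souslin hypothesis. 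This is the genuinely new ingredient, and it belongs at the \emph{beginning} of the argument, not the end.

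Second, your endgame is far harder than it needs to be. Apply \fref{lem:2} with the \emph{constant} sequences $A_n = A$, $B_n = G \setminus (A^5)_\varepsilon$, $r_n = \varepsilon$. Then the conclusion of the lemma holds at \emph{every} splitting node, so there is nothing to ``localise'': since $G$ is covered by fewer than $2^{\aleph_0}$ left translates of $A$ (pull back a small cover of $H$ by translates of $V$), and $a \mapsto h_a$ is injective, plain pigeonhole gives distinct $a,b$ with $h_a,h_b$ in a common coset $gA$; they split at \emph{some} $n$, and that $n$ is already good. No tree or $\Delta$-system refinement is needed. The difficulty you anticipate arises only when the $A_n$ genuinely vary with $n$, as in \fref{thm:Cosets}; here a single $A$ does all the work.
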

\begin{proof}
  Let $G,H,\varphi$ be as in the statement of the theorem, and let $W$ be a neighbourhood of $1_H$.
  Choose some symmetric open neighbourhood $V$ of $1_H$ such that $V^{21} \subseteq W$, as well as $\varepsilon >0$ such that $(1_G)_{2\varepsilon} \subseteq \varphi^{-1}(V)$.
  Finally, let $A=\varphi^{-1}(V^2)$.

  We first claim that $A$ is non meagre.
  Indeed, if $A$ were meagre then there would be, by the Kuratowski-Mycielski Theorem (see \cite[19.1]{Kechris:Classical}), a Cantor set $C \subseteq G$ such that $g^{-1}h \notin A$ for any $g \neq h \in C$.
  Since
  \begin{gather*}
    \varphi(g) V\cap \varphi(h)V \neq \emptyset \quad \Longleftrightarrow \quad \varphi(g^{-1}h) \in V^2 \quad \Longleftrightarrow \quad g^{-1}h \in A,
  \end{gather*}
  there would be uncountably many disjoint left translates of $V$ in $H$, contradicting the assumption on its uniform Suslin number.

  If $(A^5)_\varepsilon$ is co-meagre in some non empty open set, then by Pettis' theorem $(A^{10})_{2\varepsilon}$ contains a neighbourhood $O$ of $1_G$.
  By choice of $V$ and $\varepsilon$, one has
  $$\varphi(O) \subseteq \varphi(A^{10})V \subseteq V^{21} \subseteq W.$$

  Towards a contradiction, let us therefore assume that $G \setminus (A^5)_\varepsilon$ is non meagre in every non empty open set.
  Applying \fref{lem:2} to $A$, $G \setminus (A^5)_\varepsilon$ and $\varepsilon$, we can find $h_{a} \in G$ for $a \in 2^\bN$ such that if $a\rest_n = b\rest_n$, $a(n)=0$, $b(n)=1$ then
  $$\partial\bigl( h_b^{-1}h_aAh_a^{-1}h_b,G \setminus (A^5)_\varepsilon \bigr) < \varepsilon.$$
  Since $A$ covers $G$ by fewer than $2^{\aleph_0}$ left translates, there are distinct $a,b \in 2^\bN$ and some $g \in G$ such that $h_a,h_b \in gA$.
  Say $a\rest_n = b\rest_n$, $a(n)=0$ and $b(n)=1$, and let $g_a,g_b \in A$ be such that $h_a=gg_a$, $h_b=gg_b$.
  Then
  $$h_b^{-1}h_aAh_{a}^{-1}h_b = g_b^{-1}g_aAg_{a}^{-1}g_b  \subseteq A^5.$$
  Hence $\partial(A^5,G \setminus (A^5)_\varepsilon) < \varepsilon$, a contradiction, which concludes the proof.
\end{proof}

\subsection{A weak small index property for topometric groups}

Recall that a Polish group $G$ has the \emph{small index property} if any subgroup $H \le G$ of index strictly less than $2^{\aleph_0}$ is open.
Kechris and Rosendal proved that a Polish group with ample generics must have the small index property.
In our context, their techniques yield the following result.

\begin{prp}
  \label{prp:SmallIndex}
  Assume $(G,\tau,\partial)$ is a Polish topometric group with ample generics.
  Then any $\partial$-closed subgroup of $G$ of index strictly less than $2^{\aleph_0}$ is open.
\end{prp}

Said differently, the $\partial$-closure of a subgroup of index strictly less than $2^{\aleph_0}$ is open (hence clopen).
This result is not very useful, for most groups we have in mind are connected, and thus do not have any open subgroups.
So, for these groups, \fref{prp:SmallIndex} merely says that they have no $\partial$-closed subgroups of index $< 2^{\aleph_0}$.
Since the proof is a straightforward adaptation of Kechris and Rosendal's proof, we don't include it and content ourselves with stating the above Proposition for the record.

\subsection{Bergman property for groups with a bi-invariant metric}
\label{sec:Bergman}

\begin{dfn}
  An abstract group $G$ has the \emph{Bergman property} if whenever $W_0 \subseteq W_1 \subseteq \ldots \subseteq W_n \ldots$ is an increasing, exhaustive sequence of subsets of $G$ there exist $n$ and $k$ such that $G=W_n^k$.
  If $k$ above can be chosen independently of the sequence $(W_n)$, then we say that $G$ is \emph{$k$-Bergman}.
\end{dfn}

In \cite{Bergman:GeneratingSubsets}, G.\ Bergman proved that ${\mathfrak S}_{\infty}$ has the Bergman property.
It was noticed independently by Y.\ de Cornulier and V.\ Pestov that the Bergman property is equivalent to each of the following statements:
\begin{enumerate}
\item Any left-invariant pseudo-metric on $G$ is bounded
\item Any action by isometries of $G$ on a metric space $X$ has bounded orbits.
\end{enumerate}

In \cite{Rosendal:PropertyOB}, C.\ Rosendal introduced a variant of the Bergman property for topological groups, which he dubbed \emph{property (OB)}; a topological group has this property if it satisfies the following condition:
\begin{quote}
  Whenever $G$ acts by isometries on a metric space $X$ in such a way that for all $x$ the mapping $g \mapsto g {\cdot} x$ is continuous, the action of $G$ has bounded orbits.
\end{quote}

Let us now introduce a new, and closely related, property.

\begin{dfn}
  Let $G$ be a group endowed with a bi-invariant (and, as usual, not necessarily separable) distance $\partial$.

  We say that $(G,\partial)$ has the \emph{metric Bergman property} if whenever $(W_n)$ is an increasing, exhaustive sequence of subsets of $G$ then for any $\varepsilon >0$ there exists $n$ and $k$ such that $(W_n)_\varepsilon^k = G$ (where $(W_n)_\varepsilon^k$ can only be parsed as $\bigl( (W_n)_\varepsilon \bigr)^k$).
  If $k$ above can be chosen independently from the sequence $(W_n)$ and $\varepsilon$, then we say that $(G,\partial)$ is \emph{metrically $k$-Bergman}.
\end{dfn}

Notice that $(W^k)_\varepsilon \subseteq (W)_\varepsilon^k \subseteq (W^k)_{k\varepsilon}$, so $(G,\partial)$ is metrically $k$-Bergman if and only if, under the same hypotheses, there exists $n$ such that $(W_n^k)_\varepsilon = G$.
If $\partial$ is discrete then the metric ($k$-)Bergman property is equivalent to the ordinary ($k$-)Bergman property, and can therefore be seen as a generalisation thereof.
The equivalences above become:
\begin{prp}
  Let $G$ be a  group endowed with a bi-invariant distance $\partial$.
  Then the following are equivalent:
  \begin{enumerate}
  \item $(G,\partial)$ has the metric Bergman property.
  \item Whenever $d$ is a left-invariant pseudo-metric on $G$ such that for some $\varepsilon > 0$, the set $\{g \colon \partial(g,1) < \varepsilon\}$ is bounded in $d$, $d$ must be bounded on $G$.
  \item Whenever $(G,\partial)$ acts by isometries on a metric space $(X,d)$ in such a way that for all $x$ there exists $\varepsilon >0$ with $\partial(g,1) < \varepsilon \ \Longrightarrow \  d(x,gx) \le 1/\varepsilon$, all $G$-orbits are bounded in $(X,d)$.
  \end{enumerate}
\end{prp}
\begin{proof}
  It is easy to check that the last two conditions are equivalent, so we only show that (i) and (ii) are equivalent.
  \begin{cycprf}
  \item[\impnext]
    Let $d$ be a left-invariant pseudo-metric on $G$, and $M \in \bN$ such that $\partial(g,1) < 1/M \ \Longrightarrow \ d(g,1) \leq M.$
    Let $W_n = \{g\colon d(g,1) \leq n\}$; by the metric Bergman property for $(G,\partial)$ there are $n$ and $k$ such that $G = (W_n)_{1/M}^k$, where $(\cdot)_\varepsilon$ is with respect to $\partial$.
    Now, $(W_n)_{1/M} = W_n {\cdot} (1)_{1/M} \subseteq W_{n+M}$, so $G \subseteq W_{n+M}^k$, whereby $d(g,1) \leq (n+M)k$ for all $g \in G$.
  \item[\impfirst]
    Let $(W_n)_n$ be an increasing, exhaustive sequence of subsets of $G$, and $\varepsilon > 0$.
    Then $(W_n \cap W_n^{-1})_n$ is still exhaustive, and so one can assume without loss of generality that each $W_n$ is symmetric and $W_0=\{1\}$.
    One can define a left-invariant pseudo-metric $d$ on $G$ by setting
    \begin{gather*}
      d(f,g)= \inf\{k_1+\ldots+k_n \colon g^{-1}f = h_1\ldots h_n, \ h_i \in (W_{k_i})_\varepsilon \}.
    \end{gather*}
    By definition, $d(f,1) \leq 1$ for all $f \in (1)_\varepsilon$ and so $d$ is bounded.
    This is only possible if there is some $k$ and some $n$ such that $G=(W_k)_\varepsilon^n$, which shows that $(G,\partial)$ has the metric Bergman property.
  \end{cycprf}
\end{proof}

Note that the Bergman property for $G$ implies the metric Bergman property for $(G,\partial)$, which in turn implies property (OB) for $(G,\partial)$.

\begin{dfn}
  \label{dfn:ApproximatelyOligomorphicAction}
  Let $G$ act by isometries on some Polish metric space $M$.
  We say that the action is \emph{approximately oligomorphic} if for any $n \in \bN$ and any $\varepsilon >0$ there exist $\bar b_1,\ldots,\bar b_k \in M^n$ such that for each $\bar x \in M^n$ there  exists $j \in \{1,\ldots,k \}$ and $g \in G$ satisfying $d(g{\cdot} \bar b_j,\bar x) \le \varepsilon$.
\end{dfn}

The action of the isometry group of the Urysohn space $\bU_1$ of diameter $1$ on $\bU_1$ is an example of approximately oligomorphic action\footnote{See the next section for a definition and a brief discussion of Urysohn spaces.}.

We then have the following analogue of \cite[Theorem~6.19]{Kechris-Rosendal:Turbulence}.

\begin{thm}
  \label{thm:AmpleGenericsMetricBergman}
  Suppose $(X,d)$ is a Polish metric space and $G$ is a closed subgroup of $\Iso(X,d)$ whose action on $(X,d)$ is approximately oligomorphic.
  Let $\tau$ denote the topology of point-wise convergence, $\partial$ denote the metric of uniform convergence, and assume that $(G,\tau,\partial)$ has ample generics.
  Then $(G,\partial)$ is metrically $21$-Bergman.
\end{thm}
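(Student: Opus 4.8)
The plan is to run an argument parallel to that of \fref{thm:AutomaticContinuity}: \fref{thm:Cosets} supplies the input coming from ample generics, while the hypothesis of approximate oligomorphicity does the work of passing from a local statement, about a $\tau$-neighbourhood of $1_G$, to the global statement $(W_n^{21})_\varepsilon = G$. Recall that here $\partial$ is the uniform metric $\partial(g,h) = \sup_{x\in X} d(gx,hx)$, and that, by the reformulation noted just after the definition of the metric Bergman property, it is enough to show: for every increasing exhaustive sequence $(W_n)$ of subsets of $G$ and every $\varepsilon>0$ there is an $n$ with $(W_n^{21})_\varepsilon = G$. So fix such a sequence and such an $\varepsilon$. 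Replacing each $W_n$ by $W_n\cap W_n^{-1}$ (the resulting sequence is still increasing and exhaustive, and $(W_n\cap W_n^{-1})^{21}\subseteq W_n^{21}$, so this costs nothing) we may assume each $W_n$ is symmetric.

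The first step is to apply \fref{thm:Cosets} with $k_n = 1_G$ and $A_n = W_n$; the hypothesis holds since $\bigcup_n W_n = G$ is co-meagre. This produces an $n_0$ such that $1_G$ lies in the $\tau$-interior of $(P)_{\varepsilon/2}$, where $P$ is the alternating product of $W_{n_0}$'s and $W_{n_0}^{-1}$'s appearing in the statement of \fref{thm:Cosets}; since $W_{n_0}$ is symmetric, $P\subseteq W_{n_0}^{10}$. Hence some basic $\tau$-open neighbourhood $U$ of $1_G$ satisfies $U\subseteq (W_{n_0}^{10})_{\varepsilon/2}$, and since $G\le\Iso(X,d)$ we may take $U = \{g\in G : d(ga_i,a_i)<\delta\text{ for all }i<m\}$ for a finite tuple $\bar a = (a_0,\dots,a_{m-1})\in X^m$ and some $\delta>0$.

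The second step is a localisation lemma, the one place where approximate oligomorphicity is used essentially: \emph{for this $\bar a$ and $\delta$ there are finitely many $g_1,\dots,g_N\in G$ with $G = \bigcup_{i\le N} U g_i U$.} To see this, apply approximate oligomorphicity to the diagonal action on $X^{2m}$ with error $\delta/3$: one gets $\bar c_1,\dots,\bar c_k\in X^{2m}$; write $\bar c_i = (\bar p_i,\bar q_i)$ with $\bar p_i,\bar q_i\in X^m$. Keep only those $i$ for which both $d(f\bar p_i,\bar a)\le\delta/3$ for some $f\in G$ and $\bar q_i$ lies within $\delta/3$ of the orbit $G\bar a$; for each kept $i$ fix such an $f_i$ and, using the second condition, some $g_i\in G$ with $d(g_i\bar a,f_i\bar q_i)\le\delta/3$. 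Given $g\in G$, applying the covering property to the point $(\bar a,g\bar a)\in X^{2m}$ yields some $i$ (which is then kept: the first condition is immediate, and the second follows since $d(f\bar q_i,g\bar a)\le\delta/3$ puts $f\bar q_i$, hence $\bar q_i$, within $\delta/3$ of $G\bar a$) and some $f\in G$ with $d(f\bar p_i,\bar a)\le\delta/3$ and $d(f\bar q_i,g\bar a)\le\delta/3$. Setting $w := ff_i^{-1}$, a short computation with the triangle inequality, using that the elements of $G$ act by isometries (comparing $f\bar p_i$ with $f_i\bar p_i$, both near $\bar a$, and $wf_i\bar q_i = f\bar q_i$ with $wg_i\bar a$), gives $d(wa_j,a_j)<\delta$ and $d(wg_ia_j,ga_j)<\delta$ for all $j$; that is, $w\in U$ and $g_i^{-1}w^{-1}g\in U$, so $g = w\,g_i\,(g_i^{-1}w^{-1}g)\in Ug_iU$. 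This proves the lemma.

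Finally, choose $n_1\ge n_0$ with $g_1,\dots,g_N\in W_{n_1}$, possible since $\bigcup_n W_n = G$ and the sequence increases. Using $W_{n_0}\subseteq W_{n_1}$ and the inclusion $(A)_s\,C\,(B)_t\subseteq (ACB)_{s+t}$ (valid because $\partial$ is bi-invariant), we conclude
\begin{gather*}
  G \;=\; \bigcup_{i\le N} U g_i U \;\subseteq\; (W_{n_1}^{10})_{\varepsilon/2}\, W_{n_1}\, (W_{n_1}^{10})_{\varepsilon/2} \;\subseteq\; \bigl(W_{n_1}^{10}\, W_{n_1}\, W_{n_1}^{10}\bigr)_{\varepsilon} \;=\; (W_{n_1}^{21})_\varepsilon ,
\end{gather*}
which is what was needed. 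Apart from the localisation lemma of the third paragraph, everything is a transcription of arguments already present in \fref{sec:Consequences}, with the same accounting of $\varepsilon$'s as in \fref{thm:Cosets}. That lemma — which says that for an approximately oligomorphic action the double coset spaces $U\backslash G/U$ (for $U$ a basic neighbourhood of $1_G$) are ``finite up to $\delta$'', the metric counterpart of the finiteness of $V\backslash G/V$ for the open subgroups $V$ used in the discrete oligomorphic setting of Kechris and Rosendal — is the step I expect to be the main obstacle: one must choose the representatives $f_i,g_i$ coherently and make sure the accumulated errors stay below $\delta$. The rest is routine once it is in place.
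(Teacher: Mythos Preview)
Your proof is correct and follows essentially the same route as the paper: apply \fref{thm:Cosets} to obtain a $\tau$-neighbourhood of $1_G$ inside $(W_{n_0}^{10})_{\varepsilon/2}$, use Roelcke precompactness of $G$ to write $G = UFU$ for a finite $F$, absorb $F$ into some later $W_{n_1}$, and conclude $G \subseteq (W_{n_1}^{21})_\varepsilon$. The only difference is that the paper quotes Rosendal's result \cite[Theorem~5.2]{Rosendal:PropertyOB} that an approximately oligomorphic closed subgroup of $\Iso(X,d)$ is Roelcke-precompact, whereas you prove this directly as your ``localisation lemma'' (with the same idea: cover $X^{2m}$ up to $\delta/3$ by finitely many orbits and read off the double-coset representatives).
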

\begin{proof}
  Let $W_n$ be an increasing, exhaustive sequence of symmetric subsets of $G$, and let $\varepsilon >0$.
  By \fref{thm:Cosets}, there is some $n$ such that $(W_n^{10})_\varepsilon$ is a neighbourhood of $1$.
  Rosendal \cite[Theorem~5.2]{Rosendal:PropertyOB} proves that $G$ is Roelcke-precompact (this is also an easy consequence of the Ryll-Nardzewski theorem in continuous logic), i.e., for any neighbourhood $V$ of $1$ there is a finite subset $F$ of $G$ such that $G=VFV$.
  We may apply this to $(W_n^{10})_\varepsilon $, find $m \ge n$ such that $F \subseteq W_m$, and then
  $$G \subseteq (W_m^{10})_\varepsilon W_m (W_m^{10})_\varepsilon \subseteq (W_m^{21})_{2\varepsilon}.$$
  Since $\varepsilon$ was arbitrary, this concludes the proof.
\end{proof}

We note that the uniformity generated by $\partial$ may not be the coarsest bi-invariant uniformity refining $\tau$ (it may be strictly finer).

\section{A criterion for ample generics}
\label{sec:Criterion}

Having shown consequences of topometric ample generics, it is time to provide a criterion for their existence.
We prove a \emph{sufficient} condition, which says roughly that if $\cM$ is a Polish metric structure, with some countable dense ``nice substructure'' $\cN$, and in addition, viewing $\cN$ as a discrete structure, $\Aut(\cN)$ has ample generics, then $\Aut(\cM)$ has ample generics as a topometric group\footnote{In subsequent work \cite{BenYaacov:AmpleGenerics} the first author proves a necessary and sufficient condition for a topometric automorphism group to have ample generics, generalising the criterion appearing in \cite{Kechris-Rosendal:Turbulence}.}.

\def\version{2}
\if\version 1
\begin{thm}
  \label{thm:ApproximationCriterionAbstract}
  Let $(X,\tau_X,\partial_X)$ be a Polish topometric space, $(Y,\tau_Y)$ a Polish topological space, and $f\colon (Y,\tau_Y) \to (X,\tau_X)$ a continuous map with $\tau_X$-dense image such that for every open $U \subseteq Y$ and $\varepsilon > 0$, the set $(fU)_\varepsilon \subseteq X$ (in the sense of $\partial_X$) is open as well.
  Assume also that for every open $V \subseteq X$ and $\varepsilon > 0$, the set $(V)_\varepsilon \subseteq X$ is open.

  Then, if $A \subseteq Y$ is co-meagre then so is $\overline{fA}^{\partial_X} \subseteq X$.
\end{thm}
\begin{proof}
  It will be enough to show that if $A \subseteq Y$ is co-meagre and $\varepsilon > 0$ then $(fA)_{2\varepsilon} \subseteq X$ is co-meagre.
  We shall do it in a manner which makes implicit use of the Banach-Mazur game criterion for co-meagreness (see \cite[Chapter I, Section 8]{Kechris:Classical}).

  Let $A \supseteq \bigcap O_n$ where each $O_n \subseteq Y$ is a dense open set, noticing that then $(f O_n)_\varepsilon$ is a dense open set in $X$.
  Let us also fix a complete metrisation $d_Y$ of $\tau_Y$.
  Using Zorn's Lemma, construct families $S_n$ of pairs $(V,U)$, where $V \subseteq Y$ and $U \subseteq X$ are open, maximal subject to the following constraints:
  \begin{enumerate}
  \item $S_0 = \{(Y,X)\}$.
  \item If $(V,U) \in S_{n+1}$ then $U \subseteq (fV)_\varepsilon$, $\diam_{d_Y}(V) < 2^{-n}$, and there is $(V',U') \in S_n$ such that $\overline{V} \subseteq O_n \cap V'$ and $U \subseteq U'$.
  \item If $(V,U),(V',U') \in S_n$ are distinct then $U \cap U' = \emptyset$.
  \end{enumerate}
  We claim that $W_n = \bigcup_{(V,U) \in S_n} U$ is dense for all $n$.
  Indeed, if not, then there is $W \subseteq X$ open, non empty, and disjoint from $W_n$ for some minimal $n > 0$.
  Then $W \cap U_{n-1} \neq \emptyset$, and we may assume that $W \subseteq U'$ for some $(V',U') \in S_{n-1}$.
  Then $f^{-1}[(W)_\varepsilon] \cap V'$ is open and non empty in $Y$, so we can find $V \subseteq Y$ open such that $\overline V \subseteq f^{-1}[(W)_\varepsilon] \cap V' \cap O_n$ of the desired diameter.
  Going back, $U = (fV)_\varepsilon \cap W$ is open and non empty, and we may add $(V,U)$ to $S_n$, contradicting maximality, and proving our claim.

  We have shown that $\bigcap W_n$ is co-meagre in $X$, and we next claim that $(fA)_{2\varepsilon} \supseteq \bigcap W_n$.
  Indeed, let $x \in \bigcap W_n$.
  By the disjointness assumption, there exists a unique sequence of $(V_n,U_n) \in S_n$ such that $x \in \bigcap U_n$.
  For each $n$ we have $y_n \in V_n$ such that $\partial(fy_n,x) < \varepsilon$.
  Moreover, the constraints we imposed yield that $A \supseteq \bigcap O_n \supseteq \bigcap V_n = \{y\}$ where $y = \lim y_n$.
  Since $\partial$ is topologically lower semi-continuous we have $\partial(x,fy) \leq \varepsilon$, so indeed $x \in (fA)_{2\varepsilon}$.

  We conclude that $(fA)_{2\varepsilon}$ is co-meagre, as desired.
\end{proof}

\else

Let us first recall the characterisation of co-meagre sets via Banach-Mazur games (see \cite[Chapter I, Section 8]{Kechris:Classical}).
If $X$ is a topological space and $A$ is a subset of $X$, the Banach-Mazur game $G(A)$ is the game where two players I, II take turns playing non empty open sets $U_i$, $V_i$ in such a way that $U_{i+1} \subseteq V_i \subseteq U_i$ for all $i \in \bN$.
Player II wins a run of this game if $\bigcap U_i \subseteq A$, else Player I wins.
Schematically, a run of $G(A)$ may be represented as follows:
\begin{align*}
  \text{I} \qquad & U_0 \qquad \qquad U_1 \subseteq V_0  \qquad \qquad  \ldots  \qquad \ \qquad U_{n+1} \subseteq V_n \quad \hfill \ldots \\
  \text{II} \qquad & \qquad  V_0 \subseteq U_0 \qquad \ \quad \ldots  \qquad \qquad V_n \subseteq U_n \qquad \qquad \qquad \hfill \ldots \\
\end{align*}
A \emph{strategy} $\sigma$ for one of the players (say, Player II), is what one would expect: $\sigma$ gives Player II a unique way to respond to any move by Player I.
A strategy is \emph{winning} if following it ensures victory.

\begin{fct}
  \label{fct:BanachMazur}
  If $X$ is a Polish space and $A \subseteq X$ then Player II has a winning strategy in $G(A)$ if and only if $A$ is co-meagre in $X$.
\end{fct}

\begin{thm}
  \label{thm:ApproximationCriterionAbstract}
  Let $(X,\tau_X,\partial_X)$ be a Polish topometric space, $(Y,\tau_Y)$ a Polish topological space, and $f\colon (Y,\tau_Y) \to (X,\tau_X)$ a continuous map with $\tau_X$-dense image such that for every open $U \subseteq Y$ and $\varepsilon > 0$, the set $(fU)_\varepsilon \subseteq X$ (in the sense of $\partial_X$) is open as well.
  Assume also that for every open $V \subseteq X$ and $\varepsilon > 0$, the set $(V)_\varepsilon \subseteq X$ is open.

  Then, if $A \subseteq Y$ is co-meagre then so is $\overline{fA}^{\partial_X} \subseteq X$.
\end{thm}
\begin{proof}
  Let $A \subseteq Y$ be co-meagre.
  Let $A \supseteq \bigcap O_n$ where each $O_n \subseteq Y$ is a dense open set, noticing that then $(f O_n)_\varepsilon$ is a dense open set in $X$.
  Let us also fix a complete metrisation $d_Y$ of $\tau_Y$.

  It will be enough to show that for every $\varepsilon > 0$, $(fA)_{2\varepsilon}$ is co-meagre in $X$.
  For this it will be enough to construct a winning strategy for Player II of $G\bigl( (fA)_{2\varepsilon} \bigr)$, which we now do.

  Our strategy will construct a sequence of open sets $W_n \subseteq Y$ and always play the open set $V_n = U_n \cap (fW_n)_\varepsilon$, making sure it is also non empty.
  We start with $W_0 = O_0$, noticing that $V_0 = U_0 \cap (fO_0)_\varepsilon$ is non empty.
  For $n > 0$ we have $U_n \subseteq (fW_{n-1})_\varepsilon$.
  Then $W_{n-1} \cap f^{-1}[(U_n)_\varepsilon]$ is open and non empty, and thus we may choose $W_n \subseteq Y$ open non empty such that $\overline{W_n} \subseteq W_{n-1} \cap f^{-1}[(U_n)_\varepsilon] \cap O_n$ and $\diam_{d_Y}(W_n) < 2^{-n}$.
  Again, $V_n = U_n \cap (fW_n)_\varepsilon$, is non empty.

  Now assume that $x \in \bigcap U_n$ for some run of the game.
  Then for each $n$ there is $y_n \in W_n$ such that $\partial(fy_n,x) < \varepsilon$.
  By the way we chose the $W_n$, we have $A \supseteq \bigcap W_n = \{y\}$ where $y = \lim y_n$.
  Since $\partial$ is topologically lower semi-continuous we have $\partial(x,fy) \leq \varepsilon$.
  Thus $x \in (fA)_{2\varepsilon}$, i.e., $\bigcap U_n \subseteq (fA)_{2\varepsilon}$, and our strategy is winning.
  Thus $(A)_{2\varepsilon}$ is co-meagre, as desired.
\end{proof}

\fi

Now, to our criterion.

\begin{dfn}
  Let $\cM$ be a Polish metric structure with universe $(M,d)$.
  We say that a (classical) countable structure $\cN$ is a \emph{countable approximating substructure} of $\cM$ if the following conditions are satisfied:
  \begin{itemize}
  \item The universe $N$ of $\cN$ is a dense countable subset of $(M,d)$.
  \item Any automorphism of $\cN$ extends to a (necessarily unique) automorphism of $\cM$, and $\Aut(\cN)$ is dense in $\Aut(\cM)$.
  \end{itemize}
  As usual, we view $\Aut(\cM)$ as a Polish topometric group by endowing it with the topology of point-wise convergence and the metric of uniform convergence.
  Similarly, we endow $\Aut(\cN)$ with the Polish group topology of point-wise convergence in the discrete set $N$, which refines the topology induced as a subset of $\Aut(\cM)$, and is, in general, strictly finer (for example, as soon as $\Aut(\cN) \subsetneq \Aut(\cM)$).

  We say that $\cN$ is a \emph{good countable approximating substructure} if in addition,
  \begin{itemize}
  \item For every open subset $U \subseteq \Aut(\cN)$ (in the topology of $\Aut(\cN)$!) and $\varepsilon > 0$, the set $(U)_\varepsilon$, as calculated in $\Aut(\cM)$, is open there.
  \end{itemize}
\end{dfn}

The following two are immediate.

\begin{lem}
  Let $(G,\tau,\partial)$ be a Polish topometric group.
  Then, for any open subset $U$ of $G$, $(U)_\varepsilon = (1)_\varepsilon {\cdot} U$ is open in $G$.
\end{lem}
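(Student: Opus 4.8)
The plan is to prove the set-theoretic identity $(U)_\varepsilon = (1)_\varepsilon \cdot U$ first, and then observe that the right-hand side is manifestly $\tau$-open, which gives the lemma immediately.

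For the identity I would argue by double inclusion, using only the right-invariance of $\partial$. If $g \in (U)_\varepsilon$, then $\partial(g,U) < \varepsilon$, so there is $u \in U$ with $\partial(g,u) < \varepsilon$; putting $v = gu^{-1}$ we get $\partial(v,1) = \partial(gu^{-1},uu^{-1}) = \partial(g,u) < \varepsilon$, so $v \in (1)_\varepsilon$ and $g = vu \in (1)_\varepsilon \cdot U$. Conversely, if $g = vu$ with $v \in (1)_\varepsilon$ and $u \in U$, then $\partial(g,u) = \partial(vu,u) = \partial(v,1) < \varepsilon$, hence $\partial(g,U) < \varepsilon$ and $g \in (U)_\varepsilon$.

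For openness I would write $(1)_\varepsilon \cdot U = \bigcup_{v \in (1)_\varepsilon} vU$. Each $vU$ is $\tau$-open, since $U$ is $\tau$-open and left translation by $v$ is a $\tau$-homeomorphism of $G$; therefore the union is $\tau$-open. The only subtlety worth flagging is that one should not try to deduce this from openness of $(1)_\varepsilon$ directly — since $\partial$ merely refines $\tau$, the ball $(1)_\varepsilon$ need not itself be $\tau$-open — but expressing the set product as a union of left translates of the open set $U$ sidesteps this completely. Beyond this observation there is no real obstacle; the statement is genuinely immediate once the identity is in hand.
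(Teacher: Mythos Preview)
Your proof is correct, and the paper itself offers no proof at all --- it simply declares the lemma ``immediate'' --- so your argument is exactly the routine verification the authors had in mind. Your remark that one should not attempt to argue via openness of $(1)_\varepsilon$ itself is a nice touch and precisely the point that makes the identity worth recording.
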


\begin{lem}
  Let $\cM$ be a Polish metric structure, and $\cN$ be a countable approximating substructure.
  For any open subset $U$ of $\Aut(\cM)$, $U\cap \Aut(\cN)$ is open in $\Aut(\cN)$.
\end{lem}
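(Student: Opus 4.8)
The plan is to observe that this lemma is essentially a reformulation of a fact already recorded in the definition of a countable approximating substructure: the topology on $\Aut(\cN)$ refines the topology it inherits as a subset of $(\Aut(\cM),\tau)$. Phrased in terms of maps, the inclusion $\iota\colon \Aut(\cN) \hookrightarrow \Aut(\cM)$ is continuous; and then $U \cap \Aut(\cN) = \iota^{-1}(U)$ is open in $\Aut(\cN)$ whenever $U$ is open in $\Aut(\cM)$. So the only thing to do, if one wants the argument self-contained, is to verify the continuity of $\iota$.

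For that, I would check that every subbasic open set of the point-wise convergence topology on $\Aut(\cM)$, intersected with $\Aut(\cN)$, is open in the topology of $\Aut(\cN)$. Concretely, fix $x \in M$, $y \in M$, $\delta > 0$, and consider the set $S = \{g \in \Aut(\cN) \colon d(gx,y) < \delta\}$; given $g_0 \in S$, I want a basic neighbourhood of $g_0$ in $\Aut(\cN)$ contained in $S$. The key point is that $N$ is dense in $(M,d)$ and every element of $\Aut(\cN)$ acts on $M$ as an isometry (being the restriction of an element of $\Aut(\cM)$). So I would pick $n \in N$ close to $x$, say $d(n,x) < \eta$ with $\eta$ chosen so that $d(g_0 x, y) + 2\eta < \delta$, and take the basic neighbourhood $\{g \in \Aut(\cN) \colon gn = g_0 n\}$. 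For $g$ in it, the isometry property gives $d(gx,g_0x) \le d(gx,gn) + d(g_0n,g_0x) = 2 d(x,n) < 2\eta$, whence $d(gx,y) \le d(gx,g_0x) + d(g_0x,y) < \delta$, so this neighbourhood lies in $S$. Passing from subbasic to basic open sets (finite intersections) is automatic.

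I do not expect a real obstacle here — the statement is genuinely immediate once one unwinds the definitions, which is presumably why the authors flag it as such. The only two things that need a little care are using the density of $N$ in $M$ and the isometry property of $\Aut(\cN)$-elements in the right order, and choosing the auxiliary $\eta$ so that the triangle-inequality estimate actually closes up; both are routine.
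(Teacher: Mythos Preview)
Your proposal is correct, and indeed the paper gives no proof at all, merely flagging the lemma as ``immediate''. You have correctly identified why: the definition of a countable approximating substructure already records that the topology on $\Aut(\cN)$ refines the one induced from $\Aut(\cM)$, and your verification via subbasic open sets and the density of $N$ in $M$ spells out exactly the routine check the authors omit.
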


And we conclude,

\begin{thm}
  \label{thm:ApproximationCriterion}
  Let $\cM$ be a Polish metric structure and let $\cN$ be a good countable approximating substructure of $\cM$.
  Then, for every co-meagre subset $A$ of $\Aut(\cN)^n$, the set $\overline{A}^\partial$, as calculated in $\Aut(\cM)^n$ is co-meagre there.
  In particular, if $\Aut(\cN)$ has ample generics then $\Aut(\cM)$ has ample generics as a topometric group.
\end{thm}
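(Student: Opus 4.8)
The plan is to derive this from the abstract criterion \fref{thm:ApproximationCriterionAbstract}, applied with $X = \Aut(\cM)^n$ carrying the product topology $\tau^n$ and the supremum metric induced by $\partial$, with $Y = \Aut(\cN)^n$ carrying the product of the point-wise convergence topologies of $\Aut(\cN)$, and with $f \colon Y \to X$ the $n$-th power of the canonical inclusion $\Aut(\cN) \hookrightarrow \Aut(\cM)$ (each automorphism of $\cN$ identified with its unique extension to $\cM$). Both $X$ and $Y$ are Polish, being finite powers of Polish groups, and $X$ is in fact a Polish topometric group: the supremum metric is bi-invariant because $\partial$ is, and it is $\tau^n$-lower semi-continuous since $\{(\bar g,\bar h) \colon \partial(\bar g,\bar h) \le r\} = \bigcap_{i<n}\{(\bar g,\bar h) \colon \partial(g_i,h_i)\le r\}$ is an intersection of $\tau^n$-closed sets.

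Next I would verify the three hypotheses of \fref{thm:ApproximationCriterionAbstract}. Continuity of $f$ is immediate, since the topology of $\Aut(\cN)$ refines the subspace topology it inherits from $\Aut(\cM)$; the $\tau^n$-density of $fY$ follows from the density of $\Aut(\cN)$ in $\Aut(\cM)$, which is built into the definition of a countable approximating substructure. The hypothesis that $(V)_\varepsilon$ is open for every open $V \subseteq X$ is exactly the first of the two ``immediate'' lemmas above, applied to the topometric group $\Aut(\cM)^n$. The remaining, and only substantial, point is the hypothesis that $(fU)_\varepsilon$ is open for every open $U \subseteq Y$: this is where goodness of $\cN$ enters, and it must be transported from $\Aut(\cN)$ to $\Aut(\cN)^n$. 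For this I would first treat a basic box $U = U_0 \times \cdots \times U_{n-1}$ with each $U_i$ open in $\Aut(\cN)$; because the metric on $X$ is the supremum metric and $fU$ is itself a product, one computes $\partial(\bar g, fU) = \max_{i<n} \partial(g_i, U_i)$, whence $(fU)_\varepsilon = (U_0)_\varepsilon \times \cdots \times (U_{n-1})_\varepsilon$ with the factors computed in $\Aut(\cM)$; each factor is open by goodness of $\cN$, so $(fU)_\varepsilon$ is open. A general open $U \subseteq Y$ is a union of such boxes, and since $(\,\cdot\,)_\varepsilon$ distributes over unions, $(fU)_\varepsilon$ is open in that case too.

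With the hypotheses checked, \fref{thm:ApproximationCriterionAbstract} gives that $\overline{A}^\partial$ — which equals $\overline{fA}^\partial$ under the identification of $Y$ with a subset of $X$ — is co-meagre in $\Aut(\cM)^n$ whenever $A \subseteq \Aut(\cN)^n$ is co-meagre, establishing the first assertion. For the ``in particular'': if $\Aut(\cN)$ has ample generics, then for each $n$ there is $\bar g \in \Aut(\cN)^n$ with co-meagre diagonal conjugacy class $\bar g^{\Aut(\cN)}$; since $\Aut(\cN)$ sits inside $\Aut(\cM)$ as a subgroup, $\bar g^{\Aut(\cN)} \subseteq \bar g^{\Aut(\cM)}$, so by the first part $\overline{\bar g^{\Aut(\cM)}}^\partial$ is co-meagre in $\Aut(\cM)^n$. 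As this holds for every $n$, and using the characterisation recorded after \fref{dfn:TopometricAmpleGenerics} of ample generics as the existence of orbits with co-meagre $\partial$-closure, $\Aut(\cM)$ has ample generics as a topometric group. The main obstacle is thus not conceptual but a matter of careful bookkeeping: keeping straight that the topology on $\Aut(\cN)^n$ is the (generally strictly finer) point-wise convergence topology, not the subspace topology from $\Aut(\cM)^n$, and checking that the goodness condition survives passage to finite powers.
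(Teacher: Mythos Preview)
Your proposal is correct and follows exactly the approach the paper intends: the paper's proof is the single line ``Immediate from the observations above and \fref{thm:ApproximationCriterionAbstract},'' and you have simply unpacked those observations, including the passage to finite powers (which the paper leaves implicit). Your verification that goodness transfers to $\Aut(\cN)^n$ via the computation $(fU)_\varepsilon = (U_0)_\varepsilon \times \cdots \times (U_{n-1})_\varepsilon$ for basic boxes, together with distributivity of $(\,\cdot\,)_\varepsilon$ over unions, is precisely the bookkeeping the paper suppresses.
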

\begin{proof}
  Immediate from the observations above and \fref{thm:ApproximationCriterionAbstract}.
\end{proof}

\section{Examples}
\label{sec:Examples}

We can now provide examples of natural Polish groups with ample metric generics which are not closed subgroups of $\fS_\infty$, and which do not have ample generics.
We also provide one example of the opposite, namely a Polish group which has does have ample generics (and \textit{a fortiori} ample metric generics), even though $\partial_u$ is not discrete (so for this specific example, the considerations of the present paper just complicate things).

The first three examples are given as automorphism groups of familiar Polish structures: the Urysohn sphere $\bU_1$, the Hilbert space $\ell_2$ and the (measure algebra of) the standard probability space $[0,1]$.
Using \fref{thm:ApproximationCriterion} we prove for each such $\cM$ that $(G,\tau,\partial) = \Aut(\cM)$ has ample generics, where $\partial$ denotes the metric of uniform convergence, instead of the cumbersome $\partial_u^\cM$.
Since the intrinsic uniform distance $\partial_u$ of each such $G$ coarsens $\partial$, it follows in each case that $(G,\tau)$ has ample metric generics.
Moreover, in all three examples, conjugacy classes are known to be meagre: this is a theorem of Kechris for $\Iso(\bU_1)$ (see \cite{Rosendal:GenericElements}), a result variously attributed to Rokhlin or Del Junco for $\Aut([0,1],\lambda)$, and a theorem of Nadkarni for $U(\ell_2)$ (see \cite[Chapter~I.2]{Kechris:Ergodic} for both).

In fact, each of $\bU_1$, $\ell_2$ and the probability algebra associated to $([0,1],\lambda)$ is $\aleph_0$-categorical, meaning that in each case the action of $G$ on $\cM$ is approximately oligomorphic.
By \fref{fct:Aleph0CatUniformity} it follows that $\partial_u = \partial$, and in each case the same can be shown by a straightforward verification.
By \fref{thm:AmpleGenericsMetricBergman} (plus the fact that each group has ample metric generics, which we prove below) we obtain that in each case $(G,\partial)$ has the metric Bergman property.

The fourth and last example is $\fS_\infty^\bN$, and like all known examples of Polish groups with ample generics, embeds as a closed subgroup of $\fS_\infty$.

\subsection{The isometry group of the bounded Urysohn space}

Recall that the \emph{Urysohn space of diameter $1$} $\bU_1$ is the unique, up to isometry, metric space of diameter $1$ which is both \emph{universal} for Polish metric spaces of diameter at most $1$ and \emph{ultrahomogeneous}, i.e., isometries between finite subsets extend to isometries of the whole space.
For information about this space, we recommend consulting the volume \cite{TopAppl:Urysohn}.
Its isometry group is a Polish topometric group, when equipped with the topology of point-wise convergence and the metric of uniform convergence
$$\partial(g,h)= \sup \bigl\{ d(gx,hx) \colon x \in \bU_1 \bigr\}.$$
The countable counterpart of $\bU_1$ is the \emph{rational Urysohn space of diameter $1$} $\bQ\bU_1$, which is both ultrahomogeneous and universal for countable metric spaces with rational distances and diameter at most $1$.

The relationship between these two spaces is simple: $\bU_1$ is the completion of $\bQ\bU_1$.
In particular, $\bQ\bU_1$ is dense in $\bU_1$ and any isometry of $\bQ\bU_1$ extends uniquely to an isometry of $\bU_1$.
A theorem due to S.\ Solecki \cite{Solecki:ExtendingPartialIsometries}, combined with the techniques of Kechris and Rosendal, implies that the automorphism group of $\bQ\bU_1$ has ample generics (this is pointed out in \cite{Kechris-Rosendal:Turbulence}).
Finally, the classical techniques for building isometries of the Urysohn space (for example, a straightforward adaptation of \cite[Lemma~3]{Cameron-Vershik:UrysohnIsometryGroups}) show that whenever
$$U = \bigl\{ g \in \Iso(\bQ\bU_1) \colon ga_i = b_i \text{ for } i < n \bigr\}$$
is some basic open subset of $\Iso(\bQ\bU_1)$, one has
$$(U)_\varepsilon= \bigl\{ g \in \Iso(\bU_1) \colon d(ga_i,b_i) < \varepsilon \bigr\}.$$
Hence $(U)_\varepsilon$ is open in $\Iso(\bU_1)$ and all the conditions of \fref{thm:ApproximationCriterion} are satisfied, from which we may deduce that $(\Iso(\bU_1),\tau,\partial)$ has ample generics.
As mentioned earlier, it follows that $(\Iso(\bU_1),\partial)$ has the metric Bergman property.

Let $\bU$ denote the full (i.e., unbounded) Urysohn space.
We equip $\Iso(\bU)$ with the topology $\tau^\bU$ of point-wise convergence, as usual, as well as with the distance $\partial^\bU$ of uniform convergence, say truncated at one to ensure finiteness.
Then $(\Iso(\bU),\tau^\bU,\partial^\bU)$ is again a Polish topometric group, and the same arguments readily adapt to show that it has ample generics, so in particular $\Iso(\bU)$ has ample metric generics.
On the other hand, since $\Iso(\bU)$ acts transitively on $\bU$, it is clear that $(\Iso(\bU),\partial^\bU)$ does not have the metric Bergman property (since the action of $\Iso(\bU)$ on $\bU$ is \emph{not} approximately oligomorphic, \fref{thm:AmpleGenericsMetricBergman} does not apply).
Thus we obtain that $(\Iso(\bU),\partial^\bU)$ and $(\Iso(\bU_1),\partial)$ are not isomorphic topological groups.

A natural question is then: are $\Iso(\bU)$ and $\Iso(\bU_1)$ isomorphic as abstract groups?
We conjecture that $\Iso(\bU_1)$ is simple, which would imply that the two groups are not isomorphic, but do not know how to prove it.

\subsection{The unitary group of a separable Hilbert space}

The situation is a bit more complicated in $\mathcal{U}(\ell_2)$, the unitary group of the complex Hilbert space $\ell_2(\bN)$.
Since $\ell_2$ is unbounded, we consider instead its closed unit ball, equipped with functions $x \mapsto \alpha x$ for $|\alpha| \leq 1$ and $(x,y) \mapsto \half[x+y]$, from which $\ell_2$ can be recovered.
The automorphism group of the unit ball is again $U(\ell_2)$; the topology of point-wise convergence is the strong operator topology (or the weak one, since they agree on $U(\ell_2)$), and the distance of uniform convergence is $\partial(S,T)=\|S-T\|$.

Everything we need here has been worked out by Rosendal \cite{Rosendal:PropertyOB}.
Let ${\mathcal Q}$ denote the algebraic closure of $\bQ$, and consider the countable subset ${\mathcal Q}\ell_2$ of $\ell_2$ made up of the sequences with finite support and all coordinates in ${\mathcal Q}$.
The crucial fact here is that, since the norm of an element of ${\mathcal Q}\ell_2$ belongs to ${\mathcal Q}$, one can carry out the usual constructions of Hilbert space geometry (most importantly, Gram-Schmidt orthonormalisation) inside ${\mathcal Q}\ell_2$.
This is used in \cite{Rosendal:PropertyOB} to prove that the automorphism group of ${\mathcal Q}\ell_2$ has ample generics.

Using the Gram-Schmidt orthonormalisation procedure, it is not hard to prove that $(U)_\varepsilon$ is open for any basic open subset $U$ of $\mathcal U(\mathcal Q \ell_2)$ (we cannot, however, give a simple explicit formula as in the previous example).
Hence $\mathcal U(\ell_2)$, with its usual Polish topology, has ample metric generics.

\subsection{The group of measure-preserving bijections of $[0,1]$}
\label{sec:ExampleProbabilitySpace}

Denote by $\lambda$ the Lebesgue measure on the unit interval $[0,1]$.
We view its automorphism group $\Aut([0,1],\lambda)$ as the automorphism group of the Polish metric structure $(\text{MALG},0,1,\wedge,\vee,\neg,d)$, where $\text{MALG}$ denotes the measure algebra on $[0,1]$ and $d(A,B)=\lambda(A \Delta B)$ (see \cite{Kechris:Classical}).
The distance of uniform convergence is:
$$\partial(g,f)=\sup \bigl\{ \lambda (fA \Delta gA) \colon A \in \text{MALG} \bigr\}.$$
In ergodic theory, one often uses instead of this $\partial$ the distance $\partial'$ defined by
$$\partial'(g,f)=\lambda(\supp gf^{-1}).$$
It is well-known that these two distances are equivalent, indeed that $ \partial \le \partial' \le 2 \partial.$
It is also not difficult to check that $\partial'$ is lower semi-continuous (i.e., that if $\lambda(\supp g) > r$, then $\lambda(\supp f) > r$ for all $f$ in some neighbourhood of $g$).
We may therefore use $\partial'$ instead of $\partial$ whenever convenient.

This time, the approximating substructure is the countable measure algebra $\mathcal A$ generated by dyadic intervals.
Kechris and Rosendal proved in \cite{Kechris-Rosendal:Turbulence} that its automorphism group has ample generics; also, $\mathcal A$ is dense in $\text{MALG}$ and any automorphism of $\mathcal A$ extends to a measure-preserving automorphism of $[0,1]$.

To check the final condition of \fref{thm:ApproximationCriterion}, we use $\partial'$ instead of $\partial$.
It is enough to observe that, whenever $\mathcal B$ is a finite subalgebra of $\mathcal A$ with atoms $B_1,\ldots,B_n$, and $U$ is the open subset of $\Aut(\mathcal A)$ defined by
$$U=\{g \in \Aut(\mathcal A) \colon \forall B \in \mathcal B \ g(B)=B \}$$
then one has (where $(U)_\varepsilon$ is in the sense of $\partial'$ rather than of $\partial$)
$$(U)_\varepsilon= \left\{ g \in \Aut([0,1],\lambda) \colon \sum_{i=1}^n \lambda (B_i \setminus g(B_i)) < \varepsilon \right\}.$$

Thus $(U)_\varepsilon$ is indeed open in $\Aut([0,1],\lambda)$, and we see that $\Aut([0,1],\lambda)$ has ample metric generics.

It was pointed out to us by T.\ Tsankov that the techniques used by Kittrell and Tsankov \cite{Kittrell-Tsankov:TopologicalPropertiesOfFullGroups} to prove an automatic continuity theorem for the full group of an ergodic, countable, measure-preserving equivalence relation readily apply to show the following.
For the sake of completeness, a proof of this appears as \fref{apx:KT}.

\begin{fct}
  \label{fct:KT}
  Any morphism from $(\Aut([0,1],\lambda),\partial)$ into a separable topological group is continuous.
\end{fct}

Combining this with our result that $\Aut([0,1],\lambda)$ has ample metric generics, we obtain

\begin{thm}
  Any morphism from $\Aut([0,1],\lambda)$, endowed with its usual Polish topology, into a separable topological group is continuous.
\end{thm}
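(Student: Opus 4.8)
The plan is to glue together the two pieces already in hand, via the Steinhaus property. First I would unwind the phrase ``ample metric generics'': by \fref{sec:ExampleProbabilitySpace} (where the hypotheses of \fref{thm:ApproximationCriterion} were checked for the dyadic subalgebra $\mathcal A \subseteq \text{MALG}$), the group $\Aut([0,1],\lambda)$ has ample metric generics, which by definition means that, writing $\partial$ for the metric of uniform convergence — equivalently $\partial_u$, since the action on $\text{MALG}$ is approximately oligomorphic — the triple $(\Aut([0,1],\lambda),\tau,\partial)$ is a Polish topometric group with ample generics in the sense of \fref{dfn:TopometricAmpleGenerics}. By the theorem stated above asserting that every Polish topometric group with ample generics is Steinhaus with exponent $10$, this group then has the (topometric) Steinhaus property.

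Next, let $\varphi\colon\Aut([0,1],\lambda)\to H$ be a morphism into a separable topological group $H$. I would first invoke \fref{fct:KT} to get that $\varphi$ is continuous from $(\Aut([0,1],\lambda),\partial)$ into $H$, and then feed this, together with the Steinhaus property just obtained, into \fref{prp:Steinhaus}; its conclusion is exactly that $\varphi$ is continuous from $(\Aut([0,1],\lambda),\tau)$ into $H$, i.e.\ with the domain carrying its usual Polish topology, which is the assertion of the theorem.

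The only point needing care is that \fref{prp:Steinhaus} is phrased for a second-countable target whereas \fref{fct:KT} only produces $\partial$-continuity into a separable target; so one should check that the proof of \fref{prp:Steinhaus} goes through with ``separable'' in place of ``second countable''. This is routine bookkeeping: second-countability of $H$ is used there only to know that the $\varphi$-preimage of a neighbourhood of $1_H$ is $\sigma$-syndetic, and that persists for separable $H$. Indeed, fix a symmetric neighbourhood $W$ of $1_H$ and a countable dense set $\{h_n\}\subseteq H$; then $H=\bigcup_n h_nW$ (for $h\in H$ the open set $hW$ meets $\{h_n\}$), so $\Aut([0,1],\lambda)=\bigcup_n\varphi^{-1}(h_nW)$, and choosing $g_n\in\varphi^{-1}(h_nW)$ whenever that set is nonempty, bi-invariance gives $\varphi^{-1}(h_nW)\subseteq g_n\varphi^{-1}(W^2)$, so $\varphi^{-1}(W^2)$ is symmetric and $\sigma$-syndetic. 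Running the argument of \fref{prp:Steinhaus} with $\varphi^{-1}(W^2)$ in place of $\varphi^{-1}(W)$, using $\partial$-continuity to pick $\varepsilon>0$ with $(1)_\varepsilon\subseteq\varphi^{-1}(W)$ and the identity $(A^{10})_\varepsilon=(1)_\varepsilon A^{10}$, yields $1\in\Int(\varphi^{-1}(W^{21}))$; taking $W$ with $W^{21}\subseteq V$ shows $\varphi^{-1}(V)$ is a $\tau$-neighbourhood of the identity for every neighbourhood $V$ of $1_H$. Thus there is no genuine obstacle: the real work — the Kittrell--Tsankov automatic continuity statement \fref{fct:KT} and the fact that $\Aut([0,1],\lambda)$ has ample metric generics — was already done, and what remains is only this synthesis.
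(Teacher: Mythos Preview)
Your proposal is correct and is exactly the synthesis the paper intends: the paper's own proof is the single sentence ``Combining this with our result that $\Aut([0,1],\lambda)$ has ample metric generics, we obtain [the theorem],'' and you have spelled out precisely that combination. Your extra care about the second-countable versus separable hypothesis in \fref{prp:Steinhaus} is a point the paper glosses over, and your bookkeeping fix (passing to $\varphi^{-1}(W^2)$ and adjusting the exponent) is the right way to handle it.
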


In addition, E.\ Glasner \cite{Glasner:AutMuRoelckePrecompact} has recently shown that the usual Polish topology of $\Aut([0,1],\lambda)$ is \emph{minimal}, i.e., $\Aut([0,1],\lambda)$ has no Hausdorff group topology strictly coarser than its usual Polish topology.
Combined with the automatic continuity property, this yields

\begin{thm}
  There is a unique separable Hausdorff group topology on $\Aut([0,1],\lambda)$ that is compatible with its algebraic structure, namely, its usual Polish topology.
\end{thm}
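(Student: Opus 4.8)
The plan is simply to feed the automatic continuity theorem just obtained into Glasner's minimality result. Write $G = \Aut([0,1],\lambda)$ and let $\tau_0$ denote its usual Polish (group) topology. Let $\tau$ be an arbitrary separable Hausdorff topology on $G$ which is compatible with the group structure; we must show $\tau = \tau_0$.

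First I would consider the identity map, regarded as a group homomorphism
\begin{gather*}
  \varphi = \id_G \colon (G,\tau_0) \longrightarrow (G,\tau).
\end{gather*}
By hypothesis $(G,\tau)$ is a separable topological group, so the automatic continuity theorem proved above (morphisms from $\Aut([0,1],\lambda)$ with its usual Polish topology into separable topological groups are continuous) applies to $\varphi$ and shows that $\varphi$ is continuous. Continuity of $\varphi$ means precisely that $\varphi^{-1}(U) = U$ is $\tau_0$-open for every $\tau$-open $U$, i.e.\ $\tau \subseteq \tau_0$.

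Next I would invoke Glasner's theorem \cite{Glasner:AutMuRoelckePrecompact} that $\tau_0$ is minimal: there is no Hausdorff group topology on $G$ strictly coarser than $\tau_0$. Since $\tau$ is a Hausdorff group topology with $\tau \subseteq \tau_0$, it is not strictly coarser, and therefore $\tau = \tau_0$. This establishes both existence (the usual Polish topology does the job) and uniqueness. There is no real obstacle here beyond lining up the two imported results; the only point requiring a moment's care is that the hypotheses match exactly — ``separable topological group'' is the class of targets for which automatic continuity was established, and ``Hausdorff'' is exactly the hypothesis in Glasner's minimality statement, so no extra assumption such as second countability or metrizability on $\tau$ is needed.
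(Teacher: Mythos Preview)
Your proof is correct and is exactly the argument the paper has in mind: the paper simply states that combining the automatic continuity theorem with Glasner's minimality result yields the conclusion, and you have spelled out precisely that deduction.
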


We should note here that Kallman \cite{Kallman:UniquenessForGroups} was the first to prove, in the eighties, that $\Aut([0,1],\lambda)$ has a unique Polish group topology compatible with its algebraic structure.

\subsection{The group $\fS_\infty^\bN$}

We conclude with an example of a different flavour, of a Polish group $(G,\tau)$ with ample generics, for which $\partial_u$ is not discrete.
In particular, $\partial_u$ need not be the finest distance for which ample generics exist: here $(G,\tau,\partial)$ has ample generics, with $\partial$ being the discrete metric, even though it is strictly finer than $\partial_u$.

The example we consider is $\fS_\infty^\bN$.
Indeed, we express $\bN$ as a disjoint union of infinitely many infinite sets $\bN = \bigcup_n A_n$, and obtain a homeomorphic embedding $\fS_\infty^\bN \hookrightarrow \fS_\infty$ by letting the $n$th copy of $\fS_\infty$ act on $A_n$.
The usual compatible left-invariant distance on $\fS_\infty$, $d_L(f,g) = 2^{- \min\{ n\colon f(n) \neq g(n) \}}$, induces one on $\fS_\infty$ via this embedding.
Then $\partial_u$ on $\fS_\infty^\bN$ is:
\begin{gather*}
  \partial_u(f,g) = \sup_{h \in \fS_\infty^\bN} d(fh,gh) = 2^{-k(f,g)}, \qquad \text{where} \qquad k(f,g) = \min \bigcup \{ A_n\colon f\rest_{A_n} \neq g\rest_{A_n} \}.
\end{gather*}
This distance is not discrete.
On the other hand, $\fS_\infty^\bN$ has ample generics since $\mathfrak{S}_{\infty}$ does, by the following.

\begin{prp}
  \label{prp:ProductAmpleGenerics}
  Let $(G_i)_{i \in I}$ be an at most countable family of Polish groups, and $G=\prod G_i$.
  Then $G$ has ample generics if and only if each $G_i$ has ample generics.

  This adapts to the topometric context: let $(G_i,\tau_i,\partial_i)$ be an at most countable family of Polish topometric groups, and $G=\prod G_i$ endowed with the product topology $\tau$ and the metric
  $$\partial\bigl( (g_i),(h_i) \bigr) = \sup \bigl\{ \partial_i(g_i,h_i) \colon i \in I \bigr\}.$$
  Then $(G,\tau,\partial)$ has ample generics iff $(G_i,\tau_i,\partial_i)$ has ample generics for all $i$.
\end{prp}
\begin{proof}
  We only give the proof of the first statement, since the topometric version is a straightforward adaptation.

  Note first that, since cooordinate projections induce surjective open morphisms from each $G^n$ onto $G_i^n$ which map diagonal conjugacy classes onto diagonal conjugacy classes, the existence of co-meagre diagonal conjugacy classes in $G^n$ immediately implies the existence of co-meagre diagonal conjugacy classes in each $G_i^n$.

  Now, assume that each $G_i$ has co-meagre diagonal conjugacy classes.
  For our purposes, it is enough to note the following easy fact: whenever $(X_i)_{i \in I}$ is an at most countable family of Polish spaces, and $(A_i)_{i \in I}$ are such that each $A_i$ is co-meagre in $X_i$, $\prod A_i$ is co-meagre in $\prod X_i$.
  Applying this with $X_i=G_i^n$ and $A_i$ equal to the co-meagre diagonal conjugacy class in $G_i^n$, we obtain the desired result.
\end{proof}

As an aside, let the discrete structure $\cM$ consist of $\bN$ equipped with predicates for the $A_n$.
Then $\fS_\infty^\bN = \Aut(\cM)$, and uniform convergence is discrete, providing an example where $\partial_u^\cM \neq \partial_u$.

\appendix

\section{Automatic continuity for $\Aut([0,1],\lambda)$}
\label{apx:KT}

We explain in this appendix how to use the techniques of \cite{Kittrell-Tsankov:TopologicalPropertiesOfFullGroups} to prove \fref{fct:KT}.
As before, $G = \Aut([0,1],\lambda)$, where $\lambda$ denotes the Lebesgue measure.
We equip it with the bi-invariant, lower semi-continuous distance $\partial$ (referred to as $\partial'$ in \fref{sec:ExampleProbabilitySpace})
$$\partial(g,f) = \lambda\left(\{x \colon gx \ne fx\} \right) = \lambda(\supp g^{-1}f).$$

We prove the following result, which implies \fref{fct:KT} (see \fref{dfn:Steinhaus} and the comments following it).

\begin{prp}
  $(G,\partial)$ is $38$-Steinhaus.
\end{prp}
\begin{proof}
  Let $W \subseteq $ be $\sigma$-syndetic, and let us prove that $W^{38}$ contains a neighbourhood of $1$.

  We may assume that $W$ is symmetric, and fix elements $g_n \in G$ such that $G = \bigcup_{n \in \bN} g_n W$.
  We also fix a measurable partition $(B_n)_{n \in \bN}$ of $[0,1]$, all of whose elements have strictly positive measure.
  For any measurable $B \subseteq \left[0,1\right]$, let $H_B = \{S \in G\colon \supp S \subseteq B\}$.

  \begin{clm}
    There exists $n \in \bN$ such that
    $$\forall T \in H_{B_n} \ \exists S \in W^2 \ S\rest_{B_n} = T\rest_{B_n}.$$
  \end{clm}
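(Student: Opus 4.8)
The plan is to exploit the $\sigma$-syndeticity of $W$ together with a pigeonhole argument over the countable partition $(B_n)$, following the strategy of Kittrell and Tsankov. First I would note that each $H_{B_n}$ is a copy of $\Aut(B_n,\lambda\rest_{B_n})$, and that these groups are ``independent'' in the sense that elements supported on disjoint pieces commute. The idea is that if the Claim failed for every $n$, then for each $n$ there would be some $T_n \in H_{B_n}$ not agreeing on $B_n$ with any element of $W^2$; I would try to amalgamate the $T_n$ into a single element $T = \prod_n T_n \in G$ (well-defined since the $B_n$ partition $[0,1]$ and the supports are disjoint), and then derive a contradiction with $G = \bigcup_n g_n W$.

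The key computation is the following: since $G = \bigcup_n g_n W$, the countably many sets $W g_n^{-1}$ cover $G$ as well (using symmetry of $W$, $g_n W$ and $W g_n^{-1}$ play symmetric roles, or one passes to inverses). One wants to find, for a fixed target $T$, an index $n$ and elements of $W$ witnessing membership. More precisely, I would consider the countably many left-translates $g_k W$ and, for the element $T$ constructed above, pick $k$ with $T \in g_k W$. The point of building $T$ from the pieces $T_n$ is that $g_k$ itself can be approximated, on all but finitely many of the $B_n$, by something controllable; quantitatively, since $\sum_n \lambda(B_n) = 1$, for all but finitely many $n$ the piece $B_n$ is small, and one can absorb the discrepancy between $g_k$ and the identity on $\bigcup_{n \geq N} B_n$ into a bounded number of factors from $W$ (here the ``$38$'' will come from carefully counting how many factors of $W$ are needed: a bounded number to move $g_k$ close to identity off a large set, plus the factors hidden inside $g_kW$, plus symmetrisation). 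Then, restricting to a single large $B_n$ with $n \geq N$, one extracts $S \in W^2$ with $S\rest_{B_n} = T_n\rest_{B_n}$, contradicting the choice of $T_n$.

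The main obstacle, and the step requiring genuine care, is the bookkeeping that converts ``$G = \bigcup_n g_n W$'' into a uniform statement about a single $B_n$: one must handle the fact that a fixed $g_k$ need not respect the partition $(B_n)$ at all, so the restriction $g_k\rest_{B_n}$ is not even an element of $H_{B_n}$. The remedy — standard in this circle of ideas — is to use the high transitivity / homogeneity of $\Aut([0,1],\lambda)$: given any measurable set $B$ of positive measure and any measure-preserving bijection, one can write it as a bounded product of involutions each of which is either supported on $B$ or swaps $B$ with a disjoint copy, and such a copy exists inside $[0,1] \setminus B$ whenever $\lambda(B) \leq 1/2$. Getting the constants to close at $38$ is then a matter of threading these decompositions through carefully; I expect this combinatorial accounting, rather than any conceptual difficulty, to be where the real work lies.

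I would also record, as a preliminary reduction used throughout, that it suffices to find \emph{some} $n$ and finitely many factors since once $W^{38} \supseteq H_{B_n}$ for a single $n$, the set $H_{B_n}$ contains a $\partial$-ball around $1$ (namely $\{S : \partial(S,1) < \lambda(B_n)\} \subseteq H_{B_n}$ after conjugating, or more directly $H_{B_n}$ itself has nonempty $\partial$-interior relative to the relevant sub-problem), and then a further bounded number of conjugates of this ball by a finite set covering $[0,1]$ by copies of $B_n$ yields a genuine neighbourhood of $1$ in $(G,\partial)$ — this last covering step is where the remaining factors of the exponent are consumed.
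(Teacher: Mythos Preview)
Your opening move is right --- amalgamate counterexamples $T_n \in H_{B_n}$ into a single $T$ and use $G = \bigcup_n g_n W$ --- but the contradiction you are aiming for does not close, and the long detour through the exponent $38$, involutions, and covering arguments is not part of this Claim at all (those belong to the later Claims and the final paragraph of the Proposition).

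The concrete gap is this: negating the Claim as stated gives, for each $n$, a $T_n$ that agrees on $B_n$ with no element of $W^2$. After gluing and choosing $k$ with $T \in g_kW$, all you learn is that $T_k$ agrees on $B_k$ with an element of $g_kW$ --- not of $W^2$ --- so there is no contradiction, and you are then forced into trying to ``absorb'' $g_k$, which is exactly the mess you describe. The paper avoids this entirely by proving an auxiliary \emph{diagonal} statement first: there exists $n$ such that every $T \in H_{B_n}$ agrees on $B_n$ with some $S \in g_nW$ (same index $n$ on both sides). Negating \emph{that} gives $T_n \in H_{B_n}$ agreeing on $B_n$ with no element of $g_nW$; glue, pick $k$ with $T \in g_kW$, and now $T$ itself witnesses a contradiction on $B_k$. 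The passage from $g_nW$ to $W^2$ is then a one-line trick: apply the auxiliary statement to $T = \id \in H_{B_n}$ to get $S_0 \in g_nW$ fixing $B_n$ pointwise, and to arbitrary $T$ to get $S_T \in g_nW$; then $S_0^{-1}S_T \in (g_nW)^{-1}(g_nW) = W^2$ and $(S_0^{-1}S_T)\rest_{B_n} = T\rest_{B_n}$. No approximation, no partition bookkeeping, no factor counting is needed for this Claim.
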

  \begin{clmprf}
    We first prove that there exists $n$ such that
    $$\forall T \in H_{B_n} \ \exists S \in g_nW \ S\rest_{B_n} = T\rest_{B_n}.$$
    If such were not the case, we could find for all $n$ some $T_n \in H_{B_n}$ such that no element of $g_nW$ coincides with $T_n$ on $B_n$.
    We may then glue all the $T_n$'s together to define $T \in G$ by setting $T(x) = T_n(x)$ for all $x \in B_n$.
    Such a $T$ would not belong to any $g_nW$, contradicting the definition of the sequence $(g_n)$.

    Since $\id \in H_{B_n}$ and $(g_n W)^{-1} (g_n W) = W^2$, our claim follows.
  \end{clmprf}

  We fix $n$ as in the first claim and let $B=B_n$.

  \begin{clm}
    The set $W^2$ contains a non trivial involution $S$ whose support is contained in $B$ and has measure less that $\lambda(B)/2$.
  \end{clm}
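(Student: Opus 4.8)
The plan is a pigeonhole argument against the countable cover of $G$ by left-translates of $W$: I would produce a continuum-sized family of involutions that \emph{automatically} satisfy the support bound, and then note that two of them must lie in a common translate $g_m W$, which forces their product into $W^2$ while keeping the support bound.

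First I would fix pairwise disjoint measurable sets $D_i \subseteq B$ of positive measure, $i \in \bN$, with $\sum_i \lambda(D_i) < \lambda(B)/2$; this is possible because $\lambda(B) > 0$. For each $i$ I split $D_i$ into two halves of equal measure and let $\iota_i \in G$ be the involution exchanging them, via some measure isomorphism, and fixing everything outside $D_i$; thus $\iota_i$ is a non-trivial involution with $\supp \iota_i = D_i$. For $T \subseteq \bN$ I set $\gamma_T := \prod_{i \in T} \iota_i$, meaning the transformation that acts as $\iota_i$ on $D_i$ for each $i \in T$ and as the identity off $\bigcup_{i \in T} D_i$. Since the $D_i$ are disjoint this is a well-defined measure-preserving bijection of $[0,1]$, it is an involution, $\gamma_T = 1$ exactly when $T = \emptyset$, and $\supp \gamma_T = \bigcup_{i \in T} D_i \subseteq B$ has measure $\le \sum_i \lambda(D_i) < \lambda(B)/2$. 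Using disjointness of the supports one checks immediately that $T \mapsto \gamma_T$ is injective and that $\gamma_T \gamma_{T'} = \gamma_{T \triangle T'}$.

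Then the family $\{\gamma_T : T \subseteq \bN\}$ has cardinality $2^{\aleph_0}$ and is contained in $G = \bigcup_{n \in \bN} g_n W$, so I can find distinct $T, T' \subseteq \bN$ and a single $m$ with $\gamma_T, \gamma_{T'} \in g_m W$; write $\gamma_T = g_m w$ and $\gamma_{T'} = g_m w'$ with $w, w' \in W$. Since each $\gamma_T$ is its own inverse and $W$ is symmetric, $\gamma_{T \triangle T'} = \gamma_T \gamma_{T'} = \gamma_T^{-1} \gamma_{T'} = w^{-1} w' \in W^2$. As $T \neq T'$, we have $T \triangle T' \neq \emptyset$, so $\gamma_{T \triangle T'}$ is a non-trivial involution whose support $\bigcup_{i \in T \triangle T'} D_i$ is contained in $B$ and has measure $< \lambda(B)/2$, which is exactly the element the claim asks for.

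I do not anticipate a genuine obstacle once the right family is in place; the single idea needed is to manufacture a continuum of involutions that already obey the support constraint, after which the mere countability of the cover $\{g_n W\}$ does everything. (In fact this step uses only $\lambda(B) > 0$ together with the symmetry and $\sigma$-syndeticity of $W$; the realization property of the first claim is not required here.) The only routine care is in verifying that $\gamma_T$ is measure-preserving and that $\gamma_T \gamma_{T'} = \gamma_{T \triangle T'}$, and both are immediate from disjointness of the $D_i$.
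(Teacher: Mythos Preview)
Your argument is correct and follows essentially the same approach as the paper: build an uncountable abelian group of involutions supported in $B$ with measure $<\lambda(B)/2$, then use the countable cover $G=\bigcup g_nW$ to find two in a common translate, whose quotient lands in $W^2$. The only cosmetic difference is in how the family is manufactured---the paper fixes a single involution $T\in H_B$ with $\lambda(\supp T)<\lambda(B)/4$ and takes $\Gamma=\{U:\forall b\in B,\ U(b)\in\{b,T(b)\}\}$, whereas you take products of countably many disjoint commuting involutions; your $\{\gamma_T\}$ is in fact a subgroup of such a $\Gamma$ for $T=\prod_i\iota_i$.
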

  \begin{clmprf}
    Indeed, pick any non trivial involution $T \in H_B$ whose support has measure less than $\lambda(B)/4$, and consider the subgroup $\Gamma$ of $H_B$ made up of all elements $U$ such that
    $$\forall b \in B \ U(b) \in \{b,T(b)\}.$$
    The subgroup $\Gamma$ is uncountable, so there must exist $m$ such that $g_mW$ contains two distinct elements $U,V$ of $\Gamma$.
    Then $S = UV = U^{-1}V \in W^2$ is an involution and $\lambda( \supp S) \leq \lambda( \supp U ) + \lambda(\supp V ) \leq \lambda(B)/2$, as desired.
  \end{clmprf}

  We fix such an $S$, as well as $C \subseteq B$ such that $\supp S \subseteq C$ and $\lambda(C)=2 \lambda( \supp S )$.

  \begin{clm}
    We have $H_C \subseteq W^{36}$.
  \end{clm}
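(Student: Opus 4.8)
The plan is to realise every $U \in H_C$ as a product of at most six conjugates of $S$ by elements of $H_C$, and to show that each such conjugate already lies in $W^6$; multiplying then gives $U \in (W^6)^6 = W^{36}$, which is exactly the claim.

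For the second point, fix $T \in H_C$; since $C \subseteq B$ we also have $T \in H_B$, so by the first Claim there is $g \in W^2$ with $g\rest_B = T\rest_B$, and $g^{-1} \in W^2$ as well because $W$, hence $W^2$, is symmetric. From $T(B) = B$ we get $g(B) = B$ and $g([0,1]\setminus B) = [0,1]\setminus B$, and since $\supp S \subseteq C \subseteq B$ a short verification — treating $x \in [0,1]\setminus B$, where $gSg^{-1}$ and $TST^{-1}$ both act trivially, separately from $x \in B$, where $g^{-1}$ agrees with $T^{-1}$ and $g$ then agrees with $T$ on the resulting point, which lies again in $B$ — shows that $gSg^{-1} = TST^{-1}$ as elements of $G$. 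Hence $TST^{-1} \in W^2 W^2 W^2 = W^6$ for every $T \in H_C$.

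It remains to prove the purely measure-theoretic statement that every $U \in H_C$ is a product of at most six conjugates of $S$ by elements of $H_C$. Here one exploits that, viewed inside $\Aut(C,\lambda)$, the involution $S$ is free and its support has measure exactly $\lambda(C)/2$. I would argue in two steps. First, every element of $\Aut(C,\lambda)$ is a product of at most three involutions of $\Aut(C,\lambda)$ — a classical fact about measure-preserving transformations of an atomless space. Second, every involution $\rho \in \Aut(C,\lambda)$ is a product of at most two conjugates of $S$ in $\Aut(C,\lambda)$: if $\lambda(\supp\rho) = \lambda(C)/2$ then $\rho$ is itself conjugate to $S$, since any two free involutions with the same support measure are conjugate in $\Aut(C,\lambda)$; if $\lambda(\supp\rho) < \lambda(C)/2$ one enlarges $\rho$ by a free involution on a set disjoint from $\supp\rho$ and uses a $(\mathbf{Z}/2\mathbf{Z})^2$-type decomposition of $\supp\rho$ to write $\rho = \sigma_1\sigma_2$ with $\sigma_1,\sigma_2$ commuting free involutions whose supports each have measure exactly $\lambda(C)/2$ — the measure constraints for placing the two supports admit a solution precisely because $\lambda(\supp\rho) \le \lambda(C)$; and if $\lambda(\supp\rho) > \lambda(C)/2$ one first splits $\supp\rho$ into two $\rho$-invariant halves and applies the previous cases to each. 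Multiplying, $U$ is a product of at most $3 \times 2 = 6$ conjugates of $S$, all the conjugating transformations lying in $\Aut(C,\lambda) = H_C \subseteq H_B$, and the previous paragraph then completes the argument.

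I expect the main obstacle to be pinning the constants down exactly: the count $W^{36}$ requires ``three involutions'' and ``two conjugates of $S$ per involution'' to be sharp, and the genuinely delicate point is the $(\mathbf{Z}/2\mathbf{Z})^2$-decomposition — exhibiting a prescribed free involution as the product of two commuting free involutions of prescribed support measures — which calls for the kind of careful cut-and-paste argument used by Kittrell and Tsankov. The rest is bookkeeping of the sort already carried out in the two previous Claims.
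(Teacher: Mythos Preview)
Your approach is essentially the paper's: Ryzhikov gives three involutions, each involution is written as a product of two involutions of support measure exactly $\lambda(\supp S)=\lambda(C)/2$ (hence conjugates of $S$ inside $H_C$), and each such conjugate lies in $W^6$ via the first Claim, yielding $W^{36}$. The paper dispatches the middle step with ``it is easy to see'' and no case split.

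Your case analysis for that middle step has a gap. In the range $\lambda(C)/2<\lambda(\supp\rho)<\lambda(C)$, splitting $\supp\rho$ into two $\rho$-invariant halves produces two involutions each of support measure strictly below $\lambda(C)/2$; your ``previous case'' then writes each of them as a product of \emph{two} conjugates of $S$, for a total of four, not two, which would only yield $W^{72}$. The clean fix, which handles every $\rho$ uniformly and makes the whole case split unnecessary, is: partition $\supp\rho$ into two $\rho$-invariant halves $E_1,E_2$, set $\rho_i=\rho\rest_{E_i}$, choose a single free involution $\tau$ supported in $C\setminus\supp\rho$ with $\lambda(\supp\tau)=\tfrac12\bigl(\lambda(C)-\lambda(\supp\rho)\bigr)$ (there is room since $\lambda(C\setminus\supp\rho)=\lambda(C)-\lambda(\supp\rho)$), and put $\sigma_i=\rho_i\tau$. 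Then each $\sigma_i$ is an involution with $\lambda(\supp\sigma_i)=\lambda(C)/2$, and $\sigma_1\sigma_2=\rho_1\rho_2\tau^2=\rho$. No $(\mathbf{Z}/2\mathbf{Z})^2$ gymnastics are needed.
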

  \begin{clmprf}
    We first note that any involution $T$ in $H_C$ whose support has the same measure as that of $S$ is conjugate to $S$ in $H_C$, so by our first claim we may find $V \in W^2$ such that $V(C)=C$ and $(VSV^{-1})\rest_C = T\rest_C$.
    Since $S$ and $T$ are equal to the identity outside $H_C$, this actually implies that $VSV^{-1}=T$, and so any involution of $H_C$ whose support has the same measure as that of $S$ belongs to $W^6$.

    It is easy to see that any involution of $H_C$ is the product of two involutions of $H_C$ whose support has the same measure as that of $S$; hence any involution in $H_C$ belongs to $W^{12}$.
    Since Ryzhikov \cite{Ryzhikov:Representations} proved that any element of $H_C$ is the product of at most $3$ involutions, we are done.
  \end{clmprf}

  We are now ready to conclude the proof: consider a sequence $(T_n)$ of elements of $G$ such that $\partial(T_n,1) \to 0$.
  Let $D_n = \supp T_n$ and $D = \bigcup_m g_m D_m$.
  It will be enough to show that there exists $m$ such that $T_m \in W^{38}$, and without loss of generality (going to a sub-sequence if necessary) we may assume that $\sum \lambda(D_n) \le \lambda(C)$, hence $\lambda(D) \le \lambda(C)$.

  Then we may find $A \subseteq C$ such that $\lambda(A) = \lambda(D)$.
  Pick some $S \in G$ such that $S(A)=D$; there exists $m$ such that $S \in g_mW$, and we let $T=g_m^{-1}S \in W$.
  We have $T(A) \supseteq D_m$, hence $T^{-1}T_mT \in H_A \subseteq H_C \subseteq W^{36}$.
  Then $T_m$ belongs to $WW^{36}W=W^{38}$, and we are done.
\end{proof}

\if\mymode i
\bibliographystyle{begnac}
\else
\bibliographystyle{alpha}
\fi
\bibliography{begnac}

\newcommand{\etalchar}[1]{$^{#1}$}
\providecommand{\bysame}{\leavevmode\hbox to3em{\hrulefill}\thinspace}
\begin{thebibliography}{BBHU08}

\bibitem[BBHU08]{BenYaacov-Berenstein-Henson-Usvyatsov:NewtonMS}
Itaï \bgroup\scshape{}{Ben Yaacov}\egroup{}, Alexander
  \bgroup\scshape{}Berenstein\egroup{}, C.~Ward
  \bgroup\scshape{}Henson\egroup{}, and Alexander
  \bgroup\scshape{}Usvyatsov\egroup{},
  \href{http://math.univ-lyon1.fr/~begnac/articles/mtfms.pdf} {\emph{Model
  theory for metric structures}}, Model theory with Applications to Algebra and
  Analysis, volume 2 (Zoé \bgroup\scshape{}Chatzidakis\egroup{}, Dugald
  \bgroup\scshape{}Macpherson\egroup{}, Anand \bgroup\scshape{}Pillay\egroup{},
  and Alex \bgroup\scshape{}Wilkie\egroup{}, eds.), London Math Society Lecture
  Note Series, vol. 350, Cambridge University Press, 2008, pp.~315--427.

\bibitem[{Ben}]{BenYaacov:AmpleGenerics}
Itaï \bgroup\scshape{}{Ben Yaacov}\egroup{},
  \href{http://math.univ-lyon1.fr/~begnac/articles/AmpleGen.pdf}
  {\emph{Concerning the existence of ample generics}}, research notes.

\bibitem[{Ben}08]{BenYaacov:TopometricSpacesAndPerturbations}
\bysame, \href{http://math.univ-lyon1.fr/~begnac/articles/TopoPert.pdf}
  {\emph{Topometric spaces and perturbations of metric structures}}, Logic and
  Analysis \textbf{1} (2008), no.~3--4, 235--272,
  \href{http://dx.doi.org/10.1007/s11813-008-0009-x}{doi:10.1007/s11813-008-00%
09-x}, \href{http://arxiv.org/abs/0802.4458}{arXiv:0802.4458}.

\bibitem[Ber06]{Bergman:GeneratingSubsets}
George~M. \bgroup\scshape{}Bergman\egroup{}, \emph{Generating infinite
  symmetric groups}, The Bulletin of the London Mathematical Society
  \textbf{38} (2006), no.~3, 429--440,
  \href{http://dx.doi.org/10.1112/S0024609305018308}{doi:10.1112/S002460930501%
8308}.

\bibitem[BU07]{BenYaacov-Usvyatsov:dFiniteness}
Itaï \bgroup\scshape{}{Ben Yaacov}\egroup{} and Alexander
  \bgroup\scshape{}Usvyatsov\egroup{},
  \href{http://math.univ-lyon1.fr/~begnac/articles/dfin.pdf} {\emph{On
  $d$-finiteness in continuous structures}}, Fundamenta Mathematicae
  \textbf{194} (2007), 67--88,
  \href{http://dx.doi.org/10.4064/fm194-1-4}{doi:10.4064/fm194-1-4}.

\bibitem[BU10]{BenYaacov-Usvyatsov:CFO}
\bysame, \href{http://math.univ-lyon1.fr/~begnac/articles/cfo.pdf}
  {\emph{Continuous first order logic and local stability}}, Transactions of
  the American Mathematical Society \textbf{362} (2010), no.~10, 5213--5259,
  \href{http://dx.doi.org/10.1090/S0002-9947-10-04837-3}{doi:10.1090/S0002-994%
7-10-04837-3}, \href{http://arxiv.org/abs/0801.4303}{arXiv:0801.4303}.

\bibitem[CV06]{Cameron-Vershik:UrysohnIsometryGroups}
P.~J. \bgroup\scshape{}Cameron\egroup{} and A.~M.
  \bgroup\scshape{}Vershik\egroup{}, \emph{Some isometry groups of the
  {U}rysohn space}, Annals of Pure and Applied Logic \textbf{143} (2006),
  no.~1-3, 70--78,
  \href{http://dx.doi.org/10.1016/j.apal.2005.08.001}{doi:10.1016/j.apal.2005.%
08.001}, \href{http://arxiv.org/abs/math/0407186}{arXiv:math/0407186}.

\bibitem[Gao09]{Gao:InvariantDescriptiveSetTheory}
Su~\bgroup\scshape{}Gao\egroup{}, \emph{Invariant descriptive set theory}, Pure
  and Applied Mathematics (Boca Raton), CRC Press, Boca Raton, FL, 2009.

\bibitem[Gla]{Glasner:AutMuRoelckePrecompact}
Eli \bgroup\scshape{}Glasner\egroup{}, \emph{The group {${\rm Aut}(\mu)$} is
  {R}oelcke precompact}, Canadian Bulletin of Mathematics, to appear,
  \href{http://arxiv.org/abs/0902.3786}{arXiv:0902.3786}.

\bibitem[HHLS93]{Hodges-Hodkinson-Lascar-Shelah:SmallIndex}
Wilfrid \bgroup\scshape{}Hodges\egroup{}, Ian
  \bgroup\scshape{}Hodkinson\egroup{}, Daniel \bgroup\scshape{}Lascar\egroup{},
  and Saharon \bgroup\scshape{}Shelah\egroup{}, \emph{The small index property
  for {$\omega$}-stable {$\omega$}-categorical structures and for the random
  graph}, Journal of the London Mathematical Society. Second Series \textbf{48}
  (1993), no.~2, 204--218,
  \href{http://dx.doi.org/10.1112/jlms/s2-48.2.204}{doi:10.1112/jlms/s2-48.2.2%
04}.

\bibitem[Iov99]{Iovino:StableBanach}
José \bgroup\scshape{}Iovino\egroup{}, \emph{Stable {B}anach spaces and
  {B}anach space structures, {I} and {II}}, Models, algebras, and proofs
  (Bogotá, 1995), Lecture Notes in Pure and Appl. Math., vol. 203, Dekker, New
  York, 1999, pp.~77--117.

\bibitem[Kal85]{Kallman:UniquenessForGroups}
Robert~R. \bgroup\scshape{}Kallman\egroup{}, \emph{Uniqueness results for
  groups of measure preserving transformations}, Proceedings of the American
  Mathematical Society \textbf{95} (1985), no.~1, 87--90,
  \href{http://dx.doi.org/10.2307/2045579}{doi:10.2307/2045579}.

\bibitem[Kec95]{Kechris:Classical}
Alexander~S. \bgroup\scshape{}Kechris\egroup{}, \emph{Classical descriptive set
  theory}, Graduate Texts in Mathematics, vol. 156, Springer-Verlag, New York,
  1995.

\bibitem[Kec10]{Kechris:Ergodic}
\bysame, \emph{Global aspects of ergodic group actions}, Mathematical Surveys
  and Monographs, vol. 160, American Mathematical Society, Providence, RI,
  2010.

\bibitem[KR07]{Kechris-Rosendal:Turbulence}
Alexander~S. \bgroup\scshape{}Kechris\egroup{} and Christian
  \bgroup\scshape{}Rosendal\egroup{}, \emph{Turbulence, amalgamation, and
  generic automorphisms of homogeneous structures}, Proceedings of the London
  Mathematical Society. Third Series \textbf{94} (2007), no.~2, 302--350,
  \href{http://dx.doi.org/10.1112/plms/pdl007}{doi:10.1112/plms/pdl007},
  \href{http://arxiv.org/abs/math/0409567}{arXiv:math/0409567}.

\bibitem[KT10]{Kittrell-Tsankov:TopologicalPropertiesOfFullGroups}
John \bgroup\scshape{}Kittrell\egroup{} and Todor
  \bgroup\scshape{}Tsankov\egroup{}, \emph{Topological properties of full
  groups}, Ergodic Theory and Dynamical Systems \textbf{30} (2010), no.~2,
  525--545,
  \href{http://dx.doi.org/10.1017/S0143385709000078}{doi:10.1017/S014338570900%
0078}.

\bibitem[LPR{\etalchar{+}}08]{TopAppl:Urysohn}
Arkady \bgroup\scshape{}Leiderman\egroup{}, Vladimir
  \bgroup\scshape{}Pestov\egroup{}, Matatyahu \bgroup\scshape{}Rubin\egroup{},
  Sławomir \bgroup\scshape{}Solecki\egroup{}, and Vladimir~V.
  \bgroup\scshape{}Uspenskij\egroup{} (eds.), \emph{{S}pecial issue: {W}orkshop
  on the {U}rysohn space}, vol. 155, Topology and its Applications, no.~14,
  2008, Held at Ben-Gurion University of the Negev, Beer Sheva, May 21--24,
  2006.

\bibitem[Mel10]{Melleray:Oscillation}
Julien \bgroup\scshape{}Melleray\egroup{}, \emph{A note on {H}jorth's
  oscillation theorem}, Journal of Symbolic Logic \textbf{75} (2010), no.~4,
  1359--1365.

\bibitem[Ros09a]{Rosendal:GenericElements}
Christian \bgroup\scshape{}Rosendal\egroup{}, \emph{The generic isometry and
  measure preserving homeomorphism are conjugate to their powers}, Fundamenta
  Mathematicae \textbf{205} (2009), no.~1, 1--27,
  \href{http://dx.doi.org/10.4064/fm205-1-1}{doi:10.4064/fm205-1-1}.

\bibitem[Ros09b]{Rosendal:PropertyOB}
\bysame, \emph{A topological version of the {B}ergman property}, Forum
  Mathematicum \textbf{21} (2009), no.~2, 299--332,
  \href{http://dx.doi.org/10.1515/FORUM.2009.014}{doi:10.1515/FORUM.2009.014}.

\bibitem[RS07]{Rosendal-Solecki:AutomaticContinuity}
Christian \bgroup\scshape{}Rosendal\egroup{} and Sławomir
  \bgroup\scshape{}Solecki\egroup{}, \emph{Automatic continuity of
  homomorphisms and fixed points on metric compacta}, Israel Journal of
  Mathematics \textbf{162} (2007), 349--371,
  \href{http://dx.doi.org/10.1007/s11856-007-0102-y}{doi:10.1007/s11856-007-01%
02-y}.

\bibitem[Ryz85]{Ryzhikov:Representations}
V.~V. \bgroup\scshape{}Ryzhikov\egroup{}, \emph{Representation of
  transformations preserving the {L}ebesgue measure, in the form of a product
  of periodic transformations}, Akademiya Nauk SSSR. Matematicheskie Zametki
  \textbf{38} (1985), no.~6, 860--865, 957.

\bibitem[Sol05]{Solecki:ExtendingPartialIsometries}
Sławomir \bgroup\scshape{}Solecki\egroup{}, \emph{Extending partial
  isometries}, Israel Journal of Mathematics \textbf{150} (2005), 315--331,
  \href{http://dx.doi.org/10.1007/BF02762385}{doi:10.1007/BF02762385}.

\end{thebibliography}

\end{document}